\definecolor{RED}{rgb}{1,0,0}\definecolor{BLUE}{rgb}{0,0,1} 
\newcommand{\GS}{\mathcal{G}_\lambda}
\newcommand{\GU}{\mathcal{G}_\lambda (u)}
\newcommand{\nint}{\int_{\RN}}
\newcommand{\RN}{\R\sp\N}
\newcommand{\R}{\mathbb{R}}
\newcommand{\C}{\mathbb{C}}
\newcommand{\X}{H\sp 1 (\RN;\C)}
\renewcommand{\RN}{\R}
\renewcommand{\nint}{\int_{-\infty}^{+\infty}}
\newcommand{\no}[1]{\|#1\|}
\newcommand{\var}{\varepsilon}
\newcommand{\pt}{\partial}
\newcommand{\om}{\omega}
\newcommand{\ra}{\rightarrow}
\newcommand{\dist}{\mathrm{dist}}
\newcommand{\wra}{\rightharpoonup}
\theoremstyle{definition}
\newtheorem{remark}{Remark}[section]
\newtheorem{corollary}{Corollary}[section]
\newtheorem{theo}{Theorem}[]
\newtheorem{lemma}{Lemma}[section]
\newtheorem{proposition}{Proposition}[section]
\numberwithin{equation}{section}
\author[]{Daniele Garrisi and Vladimir Georgiev }
\address{Universit\`a degli Studi di Pisa, Dipartimento di Matematica
"Leonida Tonelli"}
\address{Inha University, College of Mathematics Education}
\title[Uniqueness of the ground state]
{Orbital stability and uniqueness of the ground state for NLS in dimension one}
\thanks{%
The first author was supported by INHA UNIVERSITY Research Grant
through the project number 51747-01 titled "Stability in non-linear
evolution equations". The second author was supported by University of 
Pisa, 
project no. PRA-2016-41 "Fenomeni singolari in problemi deterministici
e stocastici ed applicazioni"; by INDAM, GNAMPA - Gruppo Nazionale per 
l'Analisi Matematica, la Probabilit\`{a} e le loro Applicazioni and by 
Institute of Mathematics and Informatics, Bulgarian Academy of Sciences.}
\begin{document}
\maketitle
\begin{abstract}
We prove that standing-waves solutions to the non-linear
Schr\"odinger equation in dimension one whose profiles can be obtained
as minima of the energy over the mass, are orbitally stable and non-degenerate,
provided the non-linear term $ G $ satisfies a Euler differential inequality.
When the non-linear term $ G $ is a combined pure power-type, then
there is only one positive, symmetric minimum of the energy constrained
to the constant mass.
\end{abstract}
\thispagestyle{empty}
\section{Introduction}
The purpose of this paper is to prove the orbital stability
of solitary-wave solutions to a non-linear Schr\"odinger equation
\[
\tag{NLS}
\label{eq.NLS}
i\partial_t \phi(t,x) + \Delta_x \phi(t,x) - f(\phi(t,x)) = 0,\quad
\phi\colon\R_t\times\R\sp n_x\ra\C
\]
in dimension $ n = 1 $ for general class of nonlinear functions
$ f $ such that $ f\colon\C\ra\C $ is $ C\sp 1 $ and
\begin{equation}\label{eq.h1}
f(\overline{s}) = \overline{f(s)}, \quad f(zs) = zf(s),\quad
\forall z\in \C\text{ such that } |z| = 1.
\end{equation}
From \eqref{eq.h1}, if $ s $ is a real number, then $ f(s) $ is
a real number. We denote by $ g\colon\R\to\R $ the restriction of $ f $ to
$ \R $. From the second equality of \eqref{eq.h1}, $ g $ is an
even function. Let $ G $ be the primitive of $ g $ such that $ G(0) = 0 $.
We define for every complex number $ s $
\[
F(s) := G(|s|).
\]
A solitary-wave is a solution to \eqref{eq.NLS} of
the form
\begin{equation}\label{eq.I1}
   \phi(t,x) = e\sp{i\om t} R(x),\quad
(t,x)\in [0,\infty)\times\RN
\end{equation}
so that the gauge invariance condition \eqref{eq.h1} implies that
\[
\tag{SW}
\label{eq.SW}
\phi(t,x) = z e\sp{i(v\cdot x - t|v|\sp 2)}
e\sp{i\om t} R(x - 2tv),\quad
(t,x)\in [0,\infty)\times\RN
\]
is also a solution to \eqref{eq.NLS}
for any $ v $ in $ \RN $ and any complex number $ z $ such that $ |z| = 1 $.
The profile $ R $ is a real-valued function. If $ \phi $ in \eqref{eq.SW}
is a solution to \eqref{eq.NLS}, then $ R $
satisfies the differential equation
\begin{equation}
\label{eq.E}
-R''(x) +f(R(x)) + \om R(x) = 0.
\end{equation}
In this paper we address solutions to the equation above
which can be obtained as minima of the  energy functional
\[
E\colon \X\ra\R,\quad
E(u) := \frac{1}{2}\nint |u'(x)|\sp 2 dx + \nint F(u(x))dx
\]
under the constraint
\[
S(\lambda) := \{u\in H\sp 1\mid M(u) = \lambda\},
\]
where $ M(u) := \nint |u(x)|^2dx $.
We will assume that the non-linearity satisfies conditions which
guarantee the global well-posedness of the initial value problem
of \eqref{eq.NLS} in $ H^1 $; that is, given an initial datum
$ u_0\in\X $, there exists a unique solution
$ \phi(t,x)\in C([0,+\infty);\X) $
to the Schr\"odinger equation
such that $ \phi(0,x) = u_0 (x) $.

The global well-posedness determines a one-parameter family of operators
$ U_t $ on $ \X $.

To a real number $ \lambda > 0 $, we associate two
subsets of $ H^1 (\RN;\C) $:
\begin{gather*}
\mathcal{G}_\lambda :=
\{u\in\X\mid E(u) = \inf_{S(\lambda)} E\}\\
\mathcal{G}_\lambda (u) := \{zu(\cdot + y)\mid (z,y)\in S^1\times\R\}.
\end{gather*}
The first one is called \textsl{ground state}.
The second set is a subset of the ground state; if $ u(x) = R(x) $,
then $ \mathcal{G}_\lambda (R) $ contains
the orbit of $ R $, that is, the point $ U_t (R) $ for every $ t\geq 0 $.

On $ \X $, we consider the distance given by the scalar product
\[
(u,w)_{\X} :=
\text{Re}\nint u\overline{w}(x) dx +
\text{Re}\nint\nabla u\cdot\nabla\overline{w}(x) dx.
\]
A set $ S\subseteq\X $ is said \textsl{stable}
if for every $ \var > 0 $, there exists $ \delta > 0 $ such that
\[
\dist(u,S) < \delta\implies\dist(U_t (u),S)
\]
for every $ t\geq 0 $.
One of the first result of stability is the work of
T.~Cazenave and P.~L.~Lions in 1982, \cite{CL82},
where $ f $ is a pure power function.
Extensions
to more general non-linearities have been obtained in
\cite{Wei86} and \cite{BBGM07,Shi14}. However, while
in \cite{CL82} the stability of both $ \mathcal{G}_\lambda $ and
$ \mathcal{G}_\lambda (u) $ has been proved, in \cite{BBGM07,Shi14}
only the stability of the ground state is proved.
The pure power case
\[
f(s) = -|s|\sp{p - 2} s
\]
is very special as it exhibits
the rescaling invariance $ f(ts) = t\sp{p - 1} f(s) $ for every
$ t\geq 0 $.
As a consequence, $ \GS = \GU $
for every $ u $ in $ \GS $.
In fact, it is possible to give a precise description of
an element $ u $ of the ground state:
\[
u(x) = \pm z\om\sp{\frac{1}{p - 1}} R_1 (\omega\sp{1/2} (x + y)),
\quad \om\sp{\frac{5 - p}{2(p - 1)}} \no{R_1}_{L\sp 2}\sp 2 = \lambda
\]
where $ R_1 $ is the unique positive solution to
\eqref{eq.E} when $ \om = 1 $,
$ y $ is in $ \RN $, and $ z $ is complex with $ |z| = 1 $.
Therefore, the set $ \GU $
is stable because the ground
state is stable. When more general non-linearities are considered,
the rescaling property fails, and it is not clear anymore whether
the equality $ \GS = \GU $ holds. We list our assumptions:
\[
\tag{G1}
\label{G1}
\text{ there exists } s_0\in\R\text{ such that } G(s_0) < 0
\]
there exist $ C $, $ p,q,p_* $ and $ s_* $ such that
\[
\tag{G2}
\label{G2}
\begin{split}
|G'(s)|\leq C(|s|\sp{p - 1} + |s|\sp{q - 1}),\quad\forall s\in\R\\
-C|s|\sp{p_* - 1}\leq G'(s),\quad\forall s\geq s_*
\end{split}
\]
where $ 2 < p_* < 6 $ and $ 2 < p\leq q $.


\begin{theo}[Orbital stability of $ \GS $]
\label{thm.ground-state}
Suppose that \eqref{G1} and \eqref{G2} hold. Then,
there exists $ \lambda_* \geq 0 $ such that for every
$ \lambda > \lambda_* $, the functional $ E $ has a minimum, and
$ \mathcal{G}_\lambda $ is orbitally stable.
\end{theo}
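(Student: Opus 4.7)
The plan is to follow the classical Cazenave--Lions strategy, combining a concentration--compactness analysis of the constrained variational problem with the conservation laws of the \eqref{eq.NLS} flow. First I would set $I(\lambda) := \inf_{S(\lambda)} E$ and verify, using the growth bound on $G'$ from \eqref{G2}, the one-dimensional Gagliardo--Nirenberg inequality and the mass constraint, an estimate of the form
\[
E(u) \geq \tfrac{1}{2}\no{u'}_{L^2}^2 - C(\lambda)\bigl(1 + \no{u'}_{L^2}^{\theta}\bigr)
\]
for some $\theta\in (0,2)$, exploiting $p_* < 6$ to keep the subcritical Gagliardo--Nirenberg exponent strictly below $2$. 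This shows $I(\lambda) > -\infty$ and that every minimizing sequence is bounded in $\X$. Assumption \eqref{G1} provides $s_0$ with $G(s_0) < 0$; scaling a plateau-type function built from $s_0$ by $u(\cdot)\mapsto u(\sigma\cdot)$ produces elements of $S(\lambda)$ with negative energy as soon as $\lambda$ is large enough, and I would set $\lambda_* := \inf\{\lambda > 0 \mid I(\lambda) < 0\}$.

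For $\lambda > \lambda_*$, I would apply Lions' concentration--compactness dichotomy to a minimizing sequence $(u_n)\subset S(\lambda)$. Vanishing is excluded, since together with \eqref{G2} and interpolation it would force $\nint F(u_n)\,dx\to 0$ and hence $\liminf E(u_n) \geq 0 > I(\lambda)$. Dichotomy is ruled out by the strict subadditivity
\[
I(\lambda) < I(\mu) + I(\lambda - \mu), \qquad 0 < \mu < \lambda,
\]
which is where the hypotheses enter most crucially. The remaining compactness alternative yields translations $y_n\in\R$ such that $u_n(\cdot + y_n)\to u$ strongly in $\X$, so that $u\in\mathcal{G}_\lambda$ and in particular the minimum is attained.

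For the orbital stability I would argue by contradiction: assume there exist $\var_0 > 0$, data $v_n \in \X$ and times $t_n\geq 0$ with $\dist(v_n,\mathcal{G}_\lambda)\to 0$ but $\dist(U_{t_n} v_n, \mathcal{G}_\lambda) \geq \var_0$. Conservation of $M$ and $E$ along $U_t$ together with continuity of $M$ and $E$ on $\X$ give $M(U_{t_n} v_n)\to\lambda$ and $E(U_{t_n} v_n)\to I(\lambda)$. Renormalizing to $\tilde w_n := (\lambda/M(U_{t_n} v_n))^{1/2}\, U_{t_n} v_n \in S(\lambda)$ produces a minimizing sequence for $I(\lambda)$, and by the compactness step above, after translations, $\tilde w_n$ converges strongly in $\X$ to some element of $\mathcal{G}_\lambda$. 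Since $\mathcal{G}_\lambda$ is invariant under translation and phase, this contradicts the lower bound $\var_0$.

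The step I expect to be the main obstacle is the strict subadditivity of $I$. For a pure power it follows from the exact scaling $u\mapsto t^{1/2} u(t\,\cdot)$, but under the mixed growth \eqref{G2} the nonlinear part is no longer scaling-homogeneous, so one has to replace homogeneity with a monotonicity argument, typically showing that $\lambda\mapsto I(\lambda)/\lambda$ is strictly decreasing on $(\lambda_*,\infty)$, from which subadditivity follows. This monotonicity is exactly where the threshold $\lambda_*$ becomes essential, since only for $\lambda > \lambda_*$ does one have $I(\lambda) < 0$ and the test-function rescaling producing a strict decrease at the right rate. This is the technical heart of the argument and the one place where the hypotheses must be used in a non-trivial way.
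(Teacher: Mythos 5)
Your proposal is correct and follows essentially the same route as the paper: coercivity and boundedness from below via the Gagliardo--Nirenberg inequality, the threshold $\lambda_*$ produced from \eqref{G1}, concentration--compactness with vanishing excluded because it would force $I(\lambda)=0$ and dichotomy excluded via the monotonicity of $\lambda\mapsto I(\lambda)/\lambda$, and the standard contradiction argument for stability using conservation of $E$ and $M$. The only immaterial differences are that the paper uses the Benci--Fortunato weak-convergence formulation of concentration--compactness, replaces strict subadditivity by the inequality $I(\vartheta\lambda)\leq\vartheta I(\lambda)$ together with an analysis of its equality case, and states its compactness lemma for sequences with $M(u_n)\to\lambda$ instead of renormalizing back onto $S(\lambda)$ as you do.
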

A proof of the stability of $ \mathcal{G}_\lambda $ has been made
in \cite{BBGM07} in dimension $ n\geq 3 $ and in \cite{Shi14,BS11}.
Here we present a few improvements with respect to the assumptions made
in the quoted references. Firstly, we do not use (at this point)
the growth condition \eqref{G4}, required \cite[$ \mathrm{F}_p ' $]{BBGM07}
to obtain the splitting property \eqref{prop.1.3} of
Proposition~\ref{prop.1},
which follows directly from \cite{BL83}; secondly, \eqref{G2} weakens
\cite[F4]{Shi14}, where $ s_* $ is set to zero. Instead, we allow
the non-linearity to be critical nearby the origin.
We prove the orbital stability of $ \GS $ with a version of the
Concentration-Compactness Lemma of P.~L.~Lions,
\cite{Lio84a}, introduced by V.~Benci and D.~Fortunato in \cite{BF14} where the
classic definitions of Concentration, Compactness and Vanishing are expressed in
terms of weak convergence, instead of the Concentration Function used in
\cite{Lio84a}.

Concentration. There exists a subsequence $ (u_{n_k}) $, a sequence
$ (y_k) $ and $ u $ such that
\begin{equation}
\label{eq.C}
\tag{C}
u_{n_k} (\cdot + y_{n_k})\ra u\text{ in } H\sp 1 (\R;\C).
\end{equation}
Dichotomy. There exists a subsequence $ (u_{n_k}) $ and $ (y_k) $ such that
\begin{equation}
\label{eq.D}
\tag{D}
u_{n_k} (\cdot + y_k)\wra u\text{ in } H\sp 1 (\R;\C)
\end{equation}
for some $ u $ such that
$ 0 < \no{u}_{H\sp 1} < \text{liminf}_{k\ra +\infty} \no{u_{n_k}}_{H\sp 1} $.

Vanishing
\begin{equation}
\label{eq.V}
\tag{V}
u_{n_k}(\cdot + y_k)\wra u\implies u = 0.
\end{equation}
The functional $ E $ is defined on $ \X $ instead of real-valued functions.
This perspective of the minimization problem has the value of highlighting
features of the minima which are essential in the proof of the stability
of the other set, $ \GS(u) $: if $ u $ is a minimum, then $ u = zR $, where
$ R $ is a real-valued minimum and $ z $ a complex number with $ |z| = 1 $,
\eqref{lem.minima.4} of Lemma~\ref{lem.minima}. This fact relies on the
Convex Inequality for the Gradient, \cite[Lemma~7.8]{LL01}.
In the next assumption $ G $ is $ C^2 $ and
\[
\tag{G3}
\label{G3}
12G(R(x)) - 7R(x)G'(R(x)) + R(x)^2 G''(R(x))\geq 0
\]
for every solution $ R $ of \eqref{eq.E} and $ x\in\R $.
\[
\label{G4}
\tag{G4}
|G''(s)|\leq C(|s|\sp{p - 2} + |s|\sp{q - 2})\text{ for every } s\in\R
\]
and $ p,q $ as in \eqref{G2}.
We denote with $ H^1 _r (\R) $ the space of real-valued $ H^1 $ functions
which are even on $ \R $. Let $ \lambda_* $ be as in
Theorem~\ref{thm.ground-state}.
\begin{theo}
\label{thm.non-degeneracy}
Suppose that \eqref{G1}, \eqref{G2}, \eqref{G3} and \eqref{G4} hold.
Then, for $ \lambda > \lambda_* $, minima of $ E $ on
$ S(\lambda)\cap H^1 _r $ are non-degenerate.
\end{theo}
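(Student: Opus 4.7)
By Lemma~\ref{lem.minima}, every minimum on $S(\lambda)\cap H^1_r$ has the form $zR$ with $R$ real-valued, positive, and even, solving \eqref{eq.E} with some Lagrange multiplier $\om > 0$. Non-degeneracy of $R$ is the statement that the second variation of the Lagrangian,
\[
Q(v) = \langle L_+ v, v\rangle,\qquad L_+ := -\pt_x^2 + g'(R) + \om,
\]
has trivial kernel on the tangent space $T_R := \{v \in H^1_r : \nint R v\,dx = 0\}$; equivalently, if $v \in T_R$ and $L_+ v = \mu R$ for some $\mu \in \R$, then $v = 0$.

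The first step is a spectral reduction. Differentiating \eqref{eq.E} gives $L_+ R' = 0$ with $R'$ odd; for the second-order ODE $L_+ u = 0$, a Wronskian argument (two $L^2$-solutions lie in $H^2$ and decay at $\pm\infty$ along with their derivatives, so the constant Wronskian vanishes and forces linear dependence) yields $\ker L_+\cap L^2(\R) = \spn{R'}$, hence $\ker L_+\cap H^1_r = \{0\}$. Under \eqref{G4} with $p,q > 2$, $g'(0) = 0$, so the essential spectrum of $L_+$ equals $[\om,+\infty)$; since $0$ lies below it and is not an eigenvalue, $L_+\colon H^2_r \to L^2_r$ is invertible. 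Any nontrivial element of the constrained kernel equals $\mu L_+^{-1} R$ with $\mu \neq 0$, and the orthogonality $\nint Rv = 0$ reduces non-degeneracy to the Vakhitov--Kolokolov-type scalar condition
\[
\langle R, L_+^{-1} R\rangle \neq 0,
\]
which, via $L_+\pt_\om R = -R$ obtained from the implicit function theorem (now applicable since $L_+$ is invertible), is the same as $d\lambda/d\om \neq 0$.

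Assumption \eqref{G3} enters via the mass-preserving scaling direction $w_s := R + 2xR' \in H^1_r$. Integration by parts gives $\nint R w_s = 0$, so $w_s \in T_R$; using $R'' = g(R) + \om R$ from \eqref{eq.E} one finds $L_+(xR') = -2R''$ and $L_+ R = R g'(R) - g(R)$, whence $L_+ w_s = R g'(R) - 5 g(R) - 4\om R$. A further integration by parts, based on the identities $R R' g'(R) = (R g(R) - G(R))'$ and $R' g(R) = G(R)'$, produces
\[
Q(w_s) = \nint\bigl[12 G(R) - 7 R g(R) + R^2 g'(R)\bigr]\,dx \geq 0,
\]
the non-negativity being precisely \eqref{G3}. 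Geometrically $Q(w_s)$ is the second derivative at $\tau = 1$ of $\tau \mapsto E(\tau R(\tau^2 \cdot))$ along a mass-preserving scaling curve in $S(\lambda)\cap H^1_r$, consistent with minimality. To upgrade to the strict condition, I would argue by contradiction: if $\langle R, L_+^{-1} R\rangle = 0$, the constrained kernel is the one-dimensional span of $\phi := L_+^{-1} R$, and combining the pointwise positivity \eqref{G3} with the explicit form of $L_+ w_s$ forces the integrand in \eqref{G3} to vanish on the range of $R$, which makes $s g'(s) - 5 g(s)$ linear in $s$ there; then $g$ agrees on the range of $R$ with the mass-critical pure power $s^5$, which is incompatible with the subcritical bound $p_* < 6$ in \eqref{G2}. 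Hypothesis \eqref{G4} is used throughout to ensure $E \in C^2(H^1)$ and that $L_+$ and its Hessian interpretation are well defined.

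The principal difficulty is the last step: promoting the pointwise but possibly non-strict inequality \eqref{G3}—which becomes an equality at the mass-critical power $p = 6$, exactly where $d\lambda/d\om = 0$—into the strict scalar $\langle R, L_+^{-1} R\rangle \neq 0$, while \eqref{G3} constrains $G$ only on the range of the ground state rather than globally.
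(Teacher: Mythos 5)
Your spectral reduction is the same as the paper's: the kernel of $L_+$ is spanned by the odd function $R'$, so on $H^1_r$ the degenerate direction must be $L_+^{-1}R$ up to scale, and non-degeneracy is equivalent to the Vakhitov--Kolokolov condition $\lambda'(\omega_0)=(d/d\omega)\no{R_\omega}_{L^2}^2\neq 0$. The gap is that you never prove this scalar condition, and your proposed mechanism cannot deliver it. Your identity $Q(w_s)=\nint\bigl[12G(R)-7Rg(R)+R^2g'(R)\bigr]dx$ for $w_s=R+2xR'$ is correct and nicely explains where the expression in \eqref{G3} comes from, but $Q(w_s)\geq 0$ is already a consequence of minimality (since $w_s\in T_R$), so it carries no new information. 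Your contradiction argument only closes if $Q(w_s)=0$ (then $L_+w_s=cR$ pointwise forces $g(s)=As^5$ on the range of $R$); when $\nint L(R)\,dx>0$ nothing follows, and \eqref{G3} gives no reason to exclude the coexistence of $\langle R,L_+^{-1}R\rangle=0$ with $Q(w_s)>0$. This is exactly the step you flag as ``the principal difficulty,'' and it is the actual content of the theorem. The paper's Lemma~\ref{lem.non-degeneracy} supplies it by a different use of \eqref{G3}: the first integral $R_\omega'^2=\omega R_\omega^2+2G(R_\omega)$ turns $\lambda(\omega)$ into the explicit one-dimensional integral $2\int_0^1\theta^2\,\Psi(\theta,R_*(\omega),\omega)^{-1/2}d\theta$, and \eqref{G3} is precisely the statement that $(sG'(s)-6G(s))/s^2$ is non-decreasing, which makes $\pt_\omega\Psi\leq 0$ pointwise in $\theta$ and hence $\lambda'\geq 0$, with equality only if $L\equiv 0$ on $(0,R_*(\omega_0))$.

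A secondary error: in your endgame you claim that $g(s)=As^5$ on the range of $R$ is ``incompatible with the subcritical bound $p_*<6$ in \eqref{G2}.'' It is not: the lower bound in \eqref{G2} only constrains $G'$ for $s\geq s_*$, and the upper bound is satisfied by $s^6$ near the origin, so $G(s)=as^6$ on the bounded interval $(0,R(0))$ is perfectly consistent with \eqref{G2}. The paper instead rules this case out by observing (Remark~\ref{rem.critical}) that for the critical pure power the rescaling $u_\eta=\eta^{1/2}u(\eta\cdot)$ forces $I(\lambda)=0$ or $E$ unbounded below, so no minimum exists --- contradicting the hypothesis that $R_0\in\GS$. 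You would need this existence-based argument (or an equivalent one) to finish; as written, both the key inequality and its rigidity case are missing.
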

Our work presents some changes with respect to the one of M.~Weinstein,
\cite{Wei86} for the one-dimensional case. As we mentioned earlier in the
introduction, in order to have stability the condition
\[
\tag{B3}
\label{B3}
\nint
\left[f(R^2) + 2R^2 f'(R^2) |R'(x)|^2\right]dx\neq 0
\]
was required (in his notation $ f(s)s = G'(s) $). In \eqref{G3}
we offer a different approach, as we prescribe a condition on
the non-linearity rather than on a solution to \eqref{eq.E}.

The non-degeneracy implies that the set $ \GS\cap H^1 _r $ is finite.
This should be compared with the pure-power case, where $ \GS\cap H^1 _r $
consists of exactly two functions. Consequently, under the same
assumptions as the theorem above and adding the assumption

\begin{theo}
\label{thm.sw}
Then the set $ \GS(u) $ is stable for every $ u\in\GS $.
\end{theo}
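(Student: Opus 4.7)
The strategy is to combine the stability of the full ground state (Theorem~\ref{thm.ground-state}) with the non-degeneracy of Theorem~\ref{thm.non-degeneracy} to decompose $ \GS $ into finitely many well-separated $ (S^1\times\R) $-orbits, and then to propagate the stability of the union down to each individual orbit through a continuity-in-time argument.

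First I would establish the orbit decomposition
\[
\GS \;=\; \bigcup_{i=1}^{N} \GS(R_i),
\]
where $ \{R_1,\dots,R_N\} = \GS\cap H^1_r $ is the finite set supplied by Theorem~\ref{thm.non-degeneracy}. Picking any $ u\in\GS $, Lemma~\ref{lem.minima} writes $ u=zR $ with $ R $ real-valued, and a Schwarz rearrangement argument applied to $ |R| $ (using that $ F(s)=G(|s|) $ so the potential energy depends only on $ |R| $), together with Polya-Szego and its one-dimensional equality case, forces $ R $ to be, up to translation, equal to the symmetric decreasing rearrangement of its absolute value, hence equal to $ \pm R_i(\cdot-y_0) $ for some $ i $ and some $ y_0\in\R $. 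The extra hypothesis truncated from the theorem statement should supply whatever regularity is needed to invoke the equality case cleanly.

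Next I would verify that distinct orbits are uniformly separated. For $ i\neq j $ the map $ (z,y)\mapsto\no{R_i-zR_j(\cdot+y)}_{\X} $ is continuous, and the cross term $ \Re\,\bar z\,(R_i,R_j(\cdot+y))_{\X} $ vanishes as $ |y|\to\infty $ by $ L^2 $-continuity of translations, so the infimum can be restricted to a bounded set of $ y $'s and is thus attained. Since $ \GS(R_i)\cap\GS(R_j)=\emptyset $ the attained value is strictly positive, and one defines
\[
d \;:=\; \min_{i\neq j}\,\inf_{(z,y)\in S^1\times\R}\no{R_i-zR_j(\cdot+y)}_{\X} \;>\; 0.
\]

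Finally, the continuity step. Since $ \GS(u) $ depends only on the $ (S^1\times\R) $-orbit of $ u $, we may assume $ u=R_{i_0} $. Fix $ \varepsilon>0 $ with $ \varepsilon<d/3 $. Theorem~\ref{thm.ground-state} yields $ \delta_0>0 $ such that $ \dist(v,\GS)<\delta_0 $ implies $ \dist(U_t v,\GS)<\varepsilon $ for all $ t\geq 0 $; set $ \delta:=\min(\delta_0,\varepsilon) $. Assume $ \dist(v,\GS(R_{i_0}))<\delta $. For each $ t\geq 0 $ there exists, thanks to $ \varepsilon<d/3 $, a \emph{unique} index $ j(t)\in\{1,\dots,N\} $ with $ \dist(U_t v,\GS(R_{j(t)}))<\varepsilon $. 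Continuity in $ t $ of $ U_t v $ together with integrality of $ j(t) $ force $ j $ to be constant on $ [0,+\infty) $, and the initial bound at $ t=0 $ identifies this constant with $ i_0 $, giving $ \dist(U_t v,\GS(u))<\varepsilon $ for all $ t\geq 0 $.

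The main obstacle is the first step: confirming that every minimum lies on the orbit of some even real minimizer. This hinges on the one-dimensional equality case of Polya-Szego and very likely on the hypothesis truncated from the statement of Theorem~\ref{thm.sw}; once the decomposition is available, the separation and the pigeonhole-in-time conclusion are essentially formal.
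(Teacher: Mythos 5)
Your proposal is correct and its skeleton (finite orbit decomposition, uniform separation of the orbits, then passage from stability of the union to stability of each piece) matches the paper's, but the decisive last step is genuinely different. The paper prevents the trajectory from migrating between orbits by an energy barrier: it shows via concentration--compactness that $ E_\delta^i := \inf_{B(\GS(R_i),\delta)} E > I(\lambda) $, so a trajectory starting with energy below $ \min_i E_\delta^i $ can never cross the sphere of radius $ \delta $ around $ \GS(R_i) $, by conservation of $ E $. You instead use a purely topological argument: once Theorem~\ref{thm.ground-state} confines $ U_t v $ to the $ \varepsilon $-neighbourhood of $ \GS $ with $ \varepsilon < d/3 $, the sets $ \{t : \dist(U_t v,\GS(R_j)) < \varepsilon\} $ are open, pairwise disjoint and cover $ [0,\infty) $, so connectedness pins the trajectory to the orbit it starts near. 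This is cleaner and needs only the continuity of $ t\mapsto U_t v $ in $ H^1 $ (which the paper has, from \eqref{eq.32}) plus $ d>0 $; it avoids the extra concentration--compactness argument for \eqref{eq.11}. Two smaller remarks. For the decomposition $ \GS=\bigcup_i\GS(R_i) $ you should simply cite Lemma~\ref{lem.minima} and Proposition~\ref{prop.finite}: the one-dimensional equality case of P\'olya--Szeg\H{o} that you invoke is delicate (plateaus), and the paper instead gets symmetry and positivity of $ |u| $ from the Convex Inequality for the Gradient together with Berestycki--Lions. For the separation $ d>0 $, your compactness argument (the cross term $ (R_i,zR_j(\cdot+y))_{\X} $ vanishes as $ |y|\to\infty $, so the infimum is attained and is positive by disjointness of the orbits) is valid and arguably more robust than the paper's one-line appeal to the functions being symmetrically decreasing about the origin.
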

Finally, we show that under an additional assumption, a uniqueness
condition holds, just like the pure-power case. Then
$ \GS = \GS(u) $, Corollary~\ref{cor.uniqueness}.
For $ \omega > 0 $, we define
\begin{equation}
\label{eq.H}
V(s) := -\frac{2G(s)}{s^2}
\end{equation}
and
\begin{equation}
\label{eq.R}
R_* (\omega) := \inf\{s > 0\mid \omega = V(s)\}
\end{equation}
whenever the set on the right is non-empty.
\begin{enumerate}[(G1)]
\setcounter{enumi}{4}
\item\label{G5}
The set $ \mathscr{A} = \{\omega\mid V'(R_* (\omega)) > 0\} $ is an interval.
\end{enumerate}
\begin{theo}[Uniqueness]
\label{thm.uniqueness}
If (G1-5) hold, then $ \GS\cap H^1 _r $ consists of exactly two functions,
$ R_+ $ and $ R_- $. The first is positive while $ R_- = -R_+ $.
\end{theo}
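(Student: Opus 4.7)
The plan is to reduce the problem to an ODE analysis: show that positive, even $H^1$ solutions of \eqref{eq.E} form a one-parameter family $\{R_\om\}_{\om\in\mathscr{A}}$ and that exactly one parameter value yields mass $\lambda$.

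First, I would show that every $R\in\GS\cap H^1_r$ has constant sign. By \eqref{lem.minima.4} of Lemma~\ref{lem.minima}, $R$ is real-valued and even. Since $F(s)=G(|s|)$ and $\||R|'\|_{L^2}\le\|R'\|_{L^2}$, replacing $R$ by $|R|$ does not increase $E$ while preserving $M$; hence $|R|$ is still a minimizer, solves \eqref{eq.E}, is $C^2$ by elliptic regularity, and is strictly positive by unique continuation. Equality in the gradient bound then forces $R$ itself not to change sign.

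Next, multiplying \eqref{eq.E} by $R'$ and integrating, using $R,R'\to 0$ at infinity together with $G(0)=0$, yields the first integral $(R'(x))^2=R(x)^2(\om-V(R(x)))$, where $\om$ is the Lagrange multiplier. A positive, even $H^1$ solution attains its maximum at $0$, forcing $R(0)=R_*(\om)$. A direct computation gives $R''(0)=-\tfrac{1}{2}R_*(\om)^2\,V'(R_*(\om))$; since $R$ decays and is non-constant, $R''(0)<0$, so $V'(R_*(\om))>0$ and $\om\in\mathscr{A}$. Conversely, for each $\om\in\mathscr{A}$, the separable ODE $R'=-R\sqrt{\om-V(R)}$ with $R(0)=R_*(\om)$ determines a unique positive even solution $R_\om$, so positive even elements of $\GS$ are in bijection with a subset of $\mathscr{A}$.

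Finally, I would prove that
\[
M(R_\om) \;=\; 2\int_0^{R_*(\om)}\frac{s\,ds}{\sqrt{\om-V(s)}}
\]
is strictly monotonic in $\om$ on the interval $\mathscr{A}$. The substitution $s=R_*(\om)\,t$ removes the moving endpoint; differentiating in $\om$, and controlling the integrable singularity at $t=1$ via the transversality $V'(R_*(\om))>0$, shows that $dM/d\om$ has constant sign on $\mathscr{A}$, with (G5) promoting this local monotonicity into a global statement across the connected set $\mathscr{A}$. Hence exactly one $\om\in\mathscr{A}$ realises mass $\lambda$, so there is a unique positive even minimizer $R_+$ and $\GS\cap H^1_r=\{R_+,-R_+\}$. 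The main obstacle is precisely this global monotonicity of $M(R_\om)$: the integrand has an integrable but singular behaviour at the upper endpoint, so differentiating under the integral sign requires care, and (G5) is exactly the structural assumption that rules out local reversals of sign of $dM/d\om$ along the branch $\{R_\om\}_{\om\in\mathscr{A}}$.
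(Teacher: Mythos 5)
Your skeleton coincides with the paper's: parametrize the positive even solutions by the Lagrange multiplier $\om$ through $R(0)=R_*(\om)$, express the mass as
\[
\lambda(\om)=2\int_0^{R_*(\om)}\frac{\rho\,d\rho}{\sqrt{\om-V(\rho)}}
=2\int_0^{R_*(\om)}\frac{\rho^2\,d\rho}{\sqrt{\om\rho^2+2G(\rho)}},
\]
and conclude by monotonicity of $\lambda(\cdot)$ on the connected set $\mathscr{A}$. The gap is in the decisive step: you assert that $d\lambda/d\om$ ``has constant sign on $\mathscr{A}$'' and attribute this to \eqref{G5} together with control of the endpoint singularity. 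Neither is the actual mechanism. \eqref{G5} only guarantees that $\mathscr{A}$ is an interval, so that the two multipliers $\om_0,\om_1$ of two putative minimizers can be joined by an interval on which $\lambda(\om)$ is defined; it says nothing about the sign of $\lambda'$. The sign comes from \eqref{G3}: after the substitution $\rho=R_*(\om)\theta$ and the identities $Q(\om,R_*(\om))=0$ and $\partial_\om\big(Q(\om,R_*(\om))\big)=0$, the derivative $\partial_\om\Psi$ reduces (up to a positive factor) to $2H(R_*(\om)\theta)-2\theta^2H(R_*(\om))$ with $H(s)=-6G(s)+sG'(s)$, and its sign is controlled exactly by the monotonicity of $H(s)/s^2$, i.e.\ by $12G-7sG'+s^2G''\ge 0$. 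Without \eqref{G3} the derivative has no reason to keep a sign; your ``differentiating in $\om$ and controlling the integrable singularity'' skips the entire computation that is the content of Lemma~\ref{lem.non-degeneracy}.

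A second, smaller inaccuracy: strict monotonicity on all of $\mathscr{A}$ is more than is available. The lemma gives only $\lambda'(\om)\ge 0$ in general, with $\lambda'(\om_0)>0$ at the multiplier of an actual minimizer (the strictness uses that $\lambda'(\om_0)=0$ would force $G(s)=as^6$ on $(0,R_*(\om_0))$, which is incompatible with the existence of a minimizer by Remark~\ref{rem.critical}). The paper's contradiction is arranged around exactly this weaker statement: $\lambda(\om_0)=\lambda(\om_1)$ and $\lambda'\ge 0$ on $[\om_0,\om_1]$ force $\lambda'\equiv 0$ there, contradicting $\lambda'(\om_0)>0$. If you repair your monotonicity step by invoking (or reproving) Lemma~\ref{lem.non-degeneracy} and run this contradiction instead of claiming global strict monotonicity, your argument becomes the paper's proof. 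The preliminary reductions (constant sign, the first integral, $R(0)=R_*(\om)$ with $\partial_sQ(\om,R_*(\om))<0$, uniqueness of the decaying positive solution of the initial value problem) are all correct and match Lemma~\ref{lem.minima} and the use of Berestycki--Lions.
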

Both the proofs of the uniqueness and the stability of $ \GS $
rely on the function $ d(\omega) $ defined by W.~Strauss in \cite{SS85}
and \cite{GSS87}. Condition which guarantees the
stability $ d''(\omega)\geq 0 $ follows from \eqref{G3}, and
Lemma~\ref{lem.non-degeneracy}. Here, we deduce the stability
of $ \GS(u) $ from the fact that the set $ H^1 _r \cap\GS $
is finite, rather than checking the assumptions of \cite{GSS87} directly.

We define
\[
L(s) = 12G(s) - 7sG'(s) + s^2 G''(s).
\]
When $ L = 0 $ on $ (0,+\infty) $, then $ F $ satisfies a Euler
equation whose solutions are linear combinations of $ s^2 $ and $ s^6 $.
If $ G $ is a pure-power $ -as^p $ with $ a > 0 $, then
\[
L(s) = a(p - 2)(6 - p)s^p
\]
which is strictly positive if $ p $ is sub-critical. Therefore
\eqref{G3} can be interpreted as a sub-critical assumption.
However, this interpretation fails as we consider sub-critical pure-power
combined non-linearities as
\begin{equation}
\label{eq.cp}
F(s) = -a|s|^p + b|s|^q,\quad p < q
\end{equation}
where
\[
L(s) = a(p - 2)(6 - p)s^p - b(q - 2)(6 - q)s^q
\]
and can be negative on $ (0,+\infty) $.
However, \eqref{G3} prescribes the behaviour
of $ L $ \textsl{only} on the union of the images of the solutions of
\eqref{eq.E} (for arbitrary $ \omega $). In fact, it turns out that
\eqref{eq.cp} does satisfy \eqref{G3}.

The paper is organized as follows. In \S\ref{sec.cc} we show
the Concentration-Compactness behaviour of minimizing sequences;
in \S\ref{sec.non-degenerate} we discuss the non-degeneracy of minima,
in \S\ref{sec.stability} the stability of the two sets $ \GS $ and
$ \GS(u) $, in \S\ref{sec.uniqueness} the uniqueness of positive, even
solutions in $ \GS $. In \S\ref{sec.cp} we show that \eqref{eq.cp}
satisfies all the assumptions mentioned above.
\section{Properties of the functional $ E $}
\label{sec.cc}
In Lemma~\ref{prop.split} we show that $ G $ can be written as sum of two
terms $ G_1 $ and $ G_2 $ which satisfy estimates \eqref{G2} and \eqref{G4}
having only a single power on the right term.
Since all the properties we will prove are well-behaved with respect to
linear combination, we will assume that in \eqref{G2} and \eqref{G4} there
is only the power $ p $.

Some of the properties listed in the next proposition have been
already thoroughly proved in \cite{BBGM07} in dimension
$ n\geq 3 $. We fill the details of the proof in the dimension
$ n = 1 $. Throughout this section we will assume that \eqref{G2}
holds.
\begin{proposition}
\label{prop1}
The functional $ E $ satisfies the following properties:
\begin{enumerate}[(i)]
\item
\label{prop1:2}
given $ e,\lambda > 0 $, there exists $ C(e,\lambda) $ such that
\[
E(u)\leq e\text{ and } M(u)\geq\lambda\implies\no{u}_{H^1} \leq C.
\]
Then minimizing sequences of $ E $ over $ S(\lambda) $ are bounded
\item
\label{prop.1.3}
if \eqref{G2} holds, given a weakly converging sequence
$ u_n\wra u $
\begin{align*}
E(u_n) &= E(u_n - u) + E(u) + o(1)\\
M(u_n) &= M(u_n - u) + M(u) + o(1)
\end{align*}
\item
\label{prop1:8}
given a bounded sequence $ (u_n) $ in $ H\sp 1 $ and
a sequence $ (\lambda_n) $ such that $ \lambda_n\to\lambda $, then
\[
\lim_{n\ra +\infty} \big(E(\lambda_n u_n) - E(\lambda u_n)\big) = 0
\]
\item
\label{prop1:4}
$ E $ is bounded from below on $ S(\lambda) $.
\end{enumerate}
\end{proposition}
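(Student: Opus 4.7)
My plan is to handle the four items in the order (iv), (i), (ii), (iii), since (iv) and (i) share a common Gagliardo--Nirenberg estimate while (ii) and (iii) are essentially weak-convergence identities. In dimension one the Gagliardo--Nirenberg inequality reads
\[
\|u\|_{L^r}^r \le C_r \|u\|_{L^2}^{(r+2)/2}\|u'\|_{L^2}^{(r-2)/2}, \qquad r\ge 2.
\]
Integrating the lower bound $G'(s)\ge -C|s|^{p_*-1}$ from the second line of \eqref{G2} and using continuity of $G$ on $[0,s_*]$ gives $G(s)\ge -C_1(1+|s|^{p_*})$ on all of $\R$. Plugging $r=p_*$ into the Gagliardo--Nirenberg estimate and then applying Young's inequality (legitimate because $(p_*-2)/2<2$) produces the a priori bound
\[
E(u) \ge \tfrac14\|u'\|_{L^2}^2 - C_2\bigl(1 + \|u\|_{L^2}^{\beta}\bigr)
\]
for an explicit exponent $\beta = 2(p_*+2)/(6-p_*)>0$. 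This immediately yields (iv) on $S(\lambda)$ and, once the mass constraint is used to control $\|u\|_{L^2}$, also (i) for minimizing sequences, because $E(u)\le e$ then forces $\|u'\|_{L^2}$ to be bounded.

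For (ii), the identity $M(u_n) = M(u_n-u) + M(u) + o(1)$ is the polarization identity in $L^2$: $\|u_n\|_{L^2}^2 - \|u_n-u\|_{L^2}^2 - \|u\|_{L^2}^2 = 2\mathrm{Re}\langle u_n-u,u\rangle_{L^2}\to 0$ by weak convergence, and the same computation in $\dot H^1$ handles the kinetic half of $E$. The nonlinear part calls for a Brezis--Lieb-type splitting: after extracting a subsequence, Rellich--Kondrachov on bounded intervals yields $u_n\to u$ almost everywhere on $\R$; since in one dimension $H^1(\R)\hookrightarrow L^r(\R)$ for every $r\in[2,\infty]$, the polynomial growth in \eqref{G2} provides an equi-integrable majorant for $|F(u_n)-F(u_n-u)-F(u)|$, and the generalized Brezis--Lieb lemma concludes that
\[
\int_{-\infty}^{+\infty}\bigl[F(u_n) - F(u_n-u) - F(u)\bigr]\,dx \to 0.
\]
I expect this to be the main technical obstacle of the proposition, since one must justify both the pointwise identity $F(u_n)-F(u_n-u)-F(u)\to 0$ a.e. and the equi-integrability hypothesis for the nonlinearity $F(s)=G(|s|)$ acting on complex-valued functions.

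Finally, for (iii), the kinetic difference equals $\tfrac12(\lambda_n^2-\lambda^2)\|u_n'\|_{L^2}^2$, which tends to $0$ by boundedness of $(u_n)$ in $H^1$. For the nonlinear contribution, writing $\lambda_\theta = \lambda + \theta(\lambda_n-\lambda)$, the fundamental theorem of calculus gives
\[
F(\lambda_n u_n) - F(\lambda u_n) = (\lambda_n-\lambda)\int_0^1 \mathrm{Re}\bigl(F'(\lambda_\theta u_n)\,\overline{u_n}\bigr)\,d\theta,
\]
and the pointwise bound $|F'(s)|\le C(|s|^{p-1}+|s|^{q-1})$ from \eqref{G2}, combined with the 1D Sobolev embedding, supplies a uniform integrable majorant; integrating and letting $\lambda_n\to\lambda$ then forces the difference to vanish.
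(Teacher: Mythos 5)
Your overall route is the paper's: (i) and (iv) via Gagliardo--Nirenberg plus Young, (ii) via the polarization identity for the two quadratic terms and a Brezis--Lieb splitting for $\int F$ (the paper simply cites Brezis--Lieb for all of (ii)), and (iii) via the fundamental theorem of calculus together with the growth bound on $F'$ --- your computation for (iii) is essentially verbatim the one in the paper (which, after the splitting of Proposition~\ref{prop.split}, works with a single power $p$). Items (ii) and (iii) are fine as sketched.

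The one genuine gap is the lower bound you extract from \eqref{G2} for items (i) and (iv). Integrating $G'(s)\ge -C|s|^{p_*-1}$ on $\{s\ge s_*\}$ and invoking continuity of $G$ on $[0,s_*]$ gives $G(s)\ge -C_1(1+|s|^{p_*})$, which is true but unusable on the line: for $u\in H^1(\R)$ the set $\{|u|\le s_*\}$ has infinite measure, so integrating the constant $-C_1$ over $\R$ produces $-\infty$, and your claimed a priori bound $E(u)\ge \tfrac14\|u'\|_{L^2}^2-C_2\bigl(1+\|u\|_{L^2}^{\beta}\bigr)$ does not follow from it. The additive constant must be eliminated, and this is precisely why the paper splits $\R$ into $A=\{|u|\ge s_*\}$ and its complement: on $A^c$ one uses the \emph{first} line of \eqref{G2}, which after integration gives $|G(s)|\le C(|s|^{p}+|s|^{q})\le C(1+s_*^{q-p})|s|^{p}$ for $|s|\le s_*$, a superquadratic power vanishing at the origin; on $A$ the second line gives $G(s)\ge G(s_*)-\tfrac{C}{p_*}|s|^{p_*}\ge -C'|s|^{p_*}$, the constant $G(s_*)$ being absorbed into the power because $|s|\ge s_*>0$ there. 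One then obtains
\[
\nint F(u(x))\,dx\ \ge\ -C\max\{1,s_*^{\,p-p_*}\}\,\no{u}_{L^d}^{\,d},
\qquad d<6,
\]
with no constant term, exactly as in \eqref{eq.14}, after which your Gagliardo--Nirenberg/Young step is legitimate and yields (i) and (iv). (For (i) note also that the $L^2$ norm is controlled because minimizing sequences satisfy $M(u_n)=\lambda$; the inequality as used in \eqref{eq.14} requires an upper bound on $M(u)$, not a lower one.)
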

\begin{proof}
\,

\eqref{prop1:2} and \eqref{prop1:4}. From the Sobolev-Gagliardo-Nirenberg
inequality there exists a $ S\in\R $ such that
\begin{equation}
\label{eq.SGN}
\no{u}_{L\sp d}\sp d\leq S\sp d
\no{u}_{L\sp 2}\sp{\frac{d + 2}{2}}
\no{u'}_{L\sp 2}\sp{\frac{d - 2}{2}}
\end{equation}
for every $ d\geq 2 $. We set $ A := \{|u|\geq s_*\} $. On $ A $,
$ F(u(x)) $ is bounded from below by $ -C|u(x)|^p $ or
$ -Cs_* ^{p - p_*} |u(x)|^{p_*} $, depending on whether
$ p\leq p_* $. On $ A\sp c $, we use $ -C|u(x)|\sp{p_*} $.
In any case, from \eqref{G2}
\[
\nint F(u(x))dx\geq -C\max\{1,s_* ^{p - p_*}\} \no{u}_{L^d} ^d
\]
where $ d < 6 $. Then, from \eqref{eq.SGN}
\begin{equation}
\label{eq.14}
\begin{split}
2e\geq 2E(u)\geq\no{u'}_{L\sp 2}\sp 2 -
C'\lambda\sp{\frac{d + 2}{4}}
\no{u'}_{L\sp 2}\sp{\frac{d - 2}{2}}.
\end{split}
\end{equation}
Then $ C(e,\lambda) $ exists because $ (d - 2)/2 < 2 $. Then minimizing
sequences are bounded
\eqref{prop.1.3}. We refer to the paper of H.~Brezis and E.~Lieb~\cite{BL83}.

\eqref{prop1:8}. We write the proof only for the non-linear part
$ \int F(u) dx $, for which we use the same notation $ E $. We define
\[
\begin{split}
k_n (x) &:= F(\lambda_n u_n(x)) - F (\lambda u_n(x)) \\
&= \int_0\sp 1 F'(t\lambda u_n(x) + (1 - t)\lambda_n u_n(x))
(\lambda_n - \lambda)u_n(x) dt.
\end{split}
\]
Then
\[
\begin{split}
|k_n (x)|&\leq C |\lambda_n - \lambda|
\int_0\sp 1 |t\lambda u_n(x) + (1 - t)\lambda_n u_n(x)|\sp{p - 1} |u_n(x)| dt\\
&\leq 2\sp{p - 2} C |\lambda_n - \lambda|
\int_0\sp 1 \big(|t\lambda u_n(x)|\sp{p - 1} +
|(1 - t)\lambda_n u_n(x)|\sp{p - 1}\big) |u_n(x)|dt\\
&\leq 2\sp{p - 2} C|\lambda_n - \lambda|
(|\lambda|\sp{p - 1} + |\lambda_n|\sp{p - 1}) |u_n(x)|\sp{p - 1}.
\end{split}
\]
By integrating on $ \R $, we obtain
\[
\begin{split}
|E(\lambda_n u_n) - E(u_n)|&\leq\nint |k_n(x)| dx\\
&\leq 2\sp{p - 2} C|\lambda_n - \lambda|
(|\lambda|\sp{p - 1} + |\lambda_n|\sp{p - 1}) \no{u_n}_{L\sp p}\sp p.
\end{split}
\]
Since $ (u_n) $ is bounded in $ H^1 $, as we take the limit
as $ n\to +\infty $, we obtain the conclusion.
\end{proof}
We are then allowed to define
\[
I(\lambda) := \inf_{S(\lambda)} E.
\]
\begin{proposition}
\label{prop.1}
The function $ I $ satisfies the following properties:
\begin{enumerate}[(i)]
\item
\label{prop1:5}
the function $ I $ is non-positive
\item
\label{prop1:6}
for every $ \vartheta\geq 1 $ and $ \lambda > 0 $, there holds
\[
I(\vartheta\lambda)\leq\vartheta I(\lambda).
\]
If equality holds, then either $ \vartheta = 1 $ or $ \vartheta > 1 $
and $ I(\lambda) = 0 $
\item
\label{prop.1.7}
there exists $ \lambda_* > 0 $ such that
\[
I < 0\text{ on } (\lambda_*,+\infty),\quad I = 0\text{ on } (0,\lambda_*].
\]
If $ \lambda\leq\lambda_* $, then $ \GS $ is empty.
\end{enumerate}
\end{proposition}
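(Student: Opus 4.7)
The plan is to split the three items along their natural lines: (i) and (ii) each follow from a specific rescaling of a test function, and (iii) is then an assembly of (ii) with two extra ingredients.

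For (i), I would fix any $u\in S(\lambda)$ and consider the mass-preserving zoom $u_k(x):=k^{-1/2}u(x/k)$. A change of variables gives $M(u_k)=\lambda$ and $\|u_k'\|_{L^2}^2=k^{-2}\|u'\|_{L^2}^2\to 0$; the primitive estimate $|G(s)|\leq C(|s|^p/p+|s|^q/q)$ obtained by integrating \eqref{G2} then yields
\[
\Big|\nint F(u_k)\,dx\Big|\leq C\bigl(k^{1-p/2}\|u\|_{L^p}^p+k^{1-q/2}\|u\|_{L^q}^q\bigr)\to 0
\]
because $p,q>2$, so $E(u_k)\to 0$ and $I(\lambda)\leq 0$. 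For (ii), I would dilate $v_\vartheta(x):=u(x/\vartheta)$, note $M(v_\vartheta)=\vartheta\lambda$, and expand
\[
E(v_\vartheta)=\vartheta E(u)+\tfrac{1}{2}\|u'\|_{L^2}^2\bigl(\vartheta^{-1}-\vartheta\bigr).
\]
The bracket is non-positive for $\vartheta\geq 1$, so passing to the infimum gives $I(\vartheta\lambda)\leq\vartheta I(\lambda)$. For the equality clause with $\vartheta>1$, applying the same identity to a minimising sequence $(u_n)\subset S(\lambda)$ produces the refinement $I(\vartheta\lambda)\leq\vartheta I(\lambda)-\tfrac{1}{2}(\vartheta-\vartheta^{-1})\liminf_n\|u_n'\|_{L^2}^2$; equality forces $\liminf\|u_n'\|_{L^2}=0$, and the Gagliardo--Nirenberg argument used in \eqref{prop1:2} sends $\int F(u_n)\to 0$, whence $I(\lambda)=\lim_n E(u_n)=0$.

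For the structural part of (iii), I would set $\lambda_*:=\inf\{\lambda>0\mid I(\lambda)<0\}$. Finiteness of $\lambda_*$ comes from \eqref{G1}: a smoothed cut-off of $s_0\chi_{[-L,L]}$ with $L$ large yields a $\phi\in H^1$ with $\nint F(\phi)\,dx<0$, and then the dilation $\phi_\sigma(x):=\phi(x/\sigma)$ sends $M(\phi_\sigma)\to+\infty$ and $E(\phi_\sigma)\to-\infty$, so $I<0$ for large mass. The identity $I\equiv 0$ on $(0,\lambda_*)$ is immediate from the definition together with (i), while (ii) propagates negativity to $(\lambda_*,+\infty)$: for $\lambda>\lambda_*$ pick $\lambda'\in(\lambda_*,\lambda)$ with $I(\lambda')<0$, and then $\vartheta:=\lambda/\lambda'>1$ gives $I(\lambda)\leq\vartheta I(\lambda')<0$. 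The boundary value $I(\lambda_*)=0$ is obtained from the continuity of $I$, which I would deduce from \eqref{prop1:8} combined with the boundedness of almost-minimisers from \eqref{prop1:2}: renormalising via the factor $\sqrt{\lambda_n/\lambda}$ delivers both $\limsup I(\lambda_n)\leq I(\lambda)$ and its reverse.

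The hard part is the emptiness of $\mathcal{G}_\lambda$ for $\lambda\leq\lambda_*$. If $u\in\mathcal{G}_\lambda$ then $E(u)=0$, and the dilation $u_\sigma(x):=u(x/\sigma)$ satisfies $M(u_\sigma)=\sigma\lambda$ and
\[
E(u_\sigma)=\tfrac{1}{2}\|u'\|_{L^2}^2\bigl(\sigma^{-1}-\sigma\bigr)<0
\]
for every $\sigma>1$; thus $I(\sigma\lambda)<0$, forcing $\sigma\lambda>\lambda_*$ for each $\sigma>1$, and letting $\sigma\to 1^+$ yields $\lambda\geq\lambda_*$, hence $\lambda=\lambda_*$. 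This disposes of $\lambda<\lambda_*$ immediately; the residual boundary $\lambda=\lambda_*$ is where the real technical care is needed. My plan there is to exploit the Euler--Lagrange equation $-u''+g(u)+\omega u=0$ satisfied by $u$ together with the one-dimensional Pohozaev identity to extract $\omega\lambda_*=2\|u'\|_{L^2}^2>0$, and then combine this variational information with the strict inequality in the equality clause of (ii) to obtain the contradiction.
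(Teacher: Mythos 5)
Your items (i) and (ii), and the skeleton of (iii), follow the paper's own route. For (ii) the paper applies exactly your dilation $u(\vartheta^{-1}x)$ to a minimizing sequence, obtains the identity you wrote in \eqref{prop.1.eq.1}, and concludes that equality with $\vartheta>1$ forces the gradients to vanish and hence, via Gagliardo--Nirenberg, $I(\lambda)=0$; your $\liminf$ should really be a $\limsup$, but the conclusion survives either way. Where you differ is that you prove from scratch two facts the paper outsources: the non-positivity of $I$ is cited from Shibata, whereas you give the mass-preserving zoom $k^{-1/2}u(x/k)$; and the existence of some $\lambda_1$ with $I(\lambda_1)<0$ is cited from \cite{BBGM07}, whereas you build a plateau function from \eqref{G1} and dilate. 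Both replacements are correct and make the write-up self-contained. Your $\lambda_*=\inf\{\lambda\mid I(\lambda)<0\}$ is equivalent to the paper's $\sup\{\lambda\mid I(\lambda)=0\}$ once negativity is known to propagate upward via (ii), and your derivation of the continuity of $I$ is the standard argument behind the citation the paper uses. Your argument that a minimizer at $\lambda<\lambda_*$ forces $\|u'\|_{L^2}=0$, hence $u=0$, is the same sandwich the paper performs with \eqref{prop.1.eq.1}.

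The genuine gap is the endpoint $\lambda=\lambda_*$, which you explicitly defer. Your dilation gives $E(u(\cdot/\sigma))=\tfrac12\|u'\|_{L^2}^2(\sigma^{-1}-\sigma)<0$ for $\sigma>1$, which at $\lambda=\lambda_*$ only re-proves that $I<0$ on $(\lambda_*,+\infty)$ and contradicts nothing. The plan you sketch does not close: Pohozaev together with $E(u)=0$ yields $\omega\lambda_*=2\|u'\|_{L^2}^2>0$, but a positive multiplier is exactly what Proposition~\ref{prop.minimum} establishes for \emph{every} minimizer, and the equality clause of (ii) is vacuous here because $I(\lambda_*)=0$ already holds. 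Worse, the natural one-parameter families are first-order degenerate at a constrained critical point with $E=0$: for the mass-preserving family $s\,u(\cdot/s^{-2})$ the Euler and Pohozaev identities give $\int G'(u)u=-3\|u'\|^2_{L^2}$ and $\int G(u)=-\tfrac12\|u'\|^2_{L^2}$, so the first derivative of the energy at $s=1$ vanishes identically; no sign comes for free. To be fair, the paper's own sentence (``for every $\vartheta$ the endpoints of the inequalities are zero'') also only applies when $\vartheta\lambda\leq\lambda_*$, i.e.\ when $\lambda<\lambda_*$, so the endpoint is treated no more carefully there; but since you singled it out as the hard case, you need an actual argument (second-order information, or strict subadditivity) rather than the two ingredients you name, neither of which produces a contradiction.
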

\begin{proof}
\,

\eqref{prop1:5}. The proof of this fact follows from
\cite[Lemma~2.3]{Shi14}.

\eqref{prop1:6}.
Such property of $ I $ has been proved in
\cite[Lemma~3.2]{Shi14} and \cite[Proposition~15]{BBGM07} using
rescalings. However, in both references it is assumed
that $ E $ achieves its infimum on $ S(\lambda) $.
Here, we just apply the same rescaling to a minimizing
sequence $ (u_n) $ over $ S(\lambda) $
\[
u_{n,\vartheta} (x) = u_n (\vartheta\sp{-1} x).
\]
Clearly, $ u_n\in S(\vartheta\lambda) $. Then
\begin{equation}
\label{prop.1.eq.1}
\begin{split}
I(\vartheta\lambda)\leq E(u_{n,\vartheta}) &= \vartheta\left(
\frac{\vartheta\sp{-2}}{2} \nint |u_n'|\sp 2 dx +
\nint F(u_n)dx\right) \\
&= \vartheta\left(E(u_n) -
\frac{1 - \vartheta\sp{-2}}{2}\no{u_n'}_{L\sp 2}\sp 2\right)
\\
&\leq \vartheta I(\lambda) -
\frac{\vartheta (1 - \vartheta\sp{-2})}{2}
\cdot\no{u_n'}_{L\sp 2}\sp 2\leq\vartheta I(\lambda).
\end{split}
\end{equation}
Clearly, if equality holds and $ \vartheta > 1 $, then the
sequence of gradients converges to zero. From \eqref{eq.f4} and
\eqref{eq.SGN}, we obtain
\[
\nint F(u_n)dx\to 0.
\]
Therefore, $ I(\lambda) = 0 $.

\eqref{prop.1.7}.
From \cite[Lemma~5]{BBGM07} it follows that there exists
$ \lambda_1 $ such that $ I(\lambda_1) < 0 $. Here we use the assumption
\eqref{G1}. Now, suppose that there exists $ \lambda_0 $ such that
$ I(\lambda_0) = 0 $. If $ \lambda' \leq \lambda_0 $, then
\[
0 = I(\lambda') =
I(\lambda'/\lambda_0\cdot\lambda_0)\leq (\lambda'/\lambda_0)
I(\lambda_0)\leq 0.
\]
The first inequality follows from (ii), while the last inequality follows
from \eqref{prop1:5}. Therefore, the set
$ \{\lambda\mid I(\lambda) = 0\} $ is bounded or is equal to
$ (0,+\infty) $. The second case is ruled about by $ \lambda_1 $.
On the first case, we define
\[
\lambda_* := \sup\{\lambda\mid I(\lambda) = 0\}.
\]
Since $ I $ is continuous (from \cite[Lemma~2.3]{Shi14}),
$ I(\lambda_*) = 0 $. Now, we consider the case $ \lambda \leq\lambda_1 $.
Let $ u\in\GS $ be a minimum. We apply to $ u $ the same rescaling
as in \eqref{prop.1.eq.1}. For every $ \vartheta $ the endpoints of
the inequalities are zero, then $ \no{u'}_2\sp 2 = 0 $
which gives $ u = 0 $, and obtain a contradiction with $ \lambda > 0 $.
\end{proof}
We define $ J_k $ the open interval $ (k,k + 1) $.
The result of the next lemma is well-known from
\cite[Lemma~I.1]{Lio84a}: if a sequence $ (u_n) $ vanishes, then
all the $ L\sp d $ norms converge to zero. In \cite{Lio84a} they show that if
\begin{equation}
\label{eq.28}
\lim_{n\to +\infty}\sup_{k\in\mathbb{Z}} \no{u_n}_{L\sp 2 (J_k)} = 0,
\end{equation}
then the sequence of $ L\sp d $ norms of $ (u_n) $
also converges to zero. Here we write a proof which provides
an estimate of the $ L^d $ norm by a product of the $ H^1 $ norm and
\eqref{eq.28}. We need the Sobolev-Gagliardo-Nirenberg inequality for the
interval $ J_k $
\begin{equation}
\label{eq.36}
\no{u}_{L\sp d (J_k)}\sp d\leq
s^d\no{u}_{L\sp 2 (J_k)}\sp{\frac{d + 2}{2}}
\no{u}_{H\sp 1 (J_k)}\sp{\frac{d - 2}{2}}.
\end{equation}
\begin{lemma}
\label{lem:vanishing}
For every $ u\in\X $ there holds
\begin{equation}
\label{eq.v1}
\no{u}_{L\sp d}\sp d\leq s^d
\Big(\sup_{k\in\mathbb{Z}} \no{u}_{L\sp 2 (J_k)}\Big)\sp{d - 2}
\no{u}_{H\sp 1}\sp 2
\end{equation}
if $ d\geq 6 $ and
\begin{equation}
\label{eq.v2}
\no{u}_{L\sp d}\sp d \leq s\sp d
\Big(\sup_{k\in\mathbb{Z}} \no{u}_{L\sp 2 (J_k)}\Big)\sp{\frac{d + 2}{2}}
\no{u}_{H\sp 1}\sp{\frac{d - 2}{2}}
\end{equation}
if $ d\leq 6 $.
\end{lemma}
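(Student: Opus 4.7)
The plan is to decompose $\R$ into the unit intervals $J_k=(k,k+1)$, apply the local Sobolev--Gagliardo--Nirenberg inequality \eqref{eq.36} on each, and sum over $k\in\mathbb{Z}$, collapsing the resulting discrete sum into the global quantities $M:=\sup_{k\in\mathbb{Z}}\no{u}_{L^2(J_k)}$ and $\no{u}_{H^1}$. Starting from \eqref{eq.36} summed term by term,
\[
\no{u}_{L^d}^d=\sum_{k\in\mathbb{Z}}\no{u}_{L^d(J_k)}^d\le s^d\sum_{k\in\mathbb{Z}}\no{u}_{L^2(J_k)}^{(d+2)/2}\no{u}_{H^1(J_k)}^{(d-2)/2},
\]
the two identities $\sum_k\no{u}_{L^2(J_k)}^2=\no{u}_{L^2}^2\le\no{u}_{H^1}^2$ and $\sum_k\no{u}_{H^1(J_k)}^2=\no{u}_{H^1}^2$ are the targets into which this sum must be collapsed.

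For the range with $(d-2)/2\ge 2$, that is $d\ge 6$, I would bound $\no{u}_{L^2(J_k)}\le M$ in every $L^2$-factor to pull $M^{(d+2)/2}$ outside, and then use the continuous inclusion $\ell^2\hookrightarrow\ell^{(d-2)/2}$ of sequence spaces to estimate $\sum_k\no{u}_{H^1(J_k)}^{(d-2)/2}\le\bigl(\sum_k\no{u}_{H^1(J_k)}^2\bigr)^{(d-2)/4}=\no{u}_{H^1}^{(d-2)/2}$. In the complementary range $(d-2)/2\le 2$ the $\ell^p$-inclusion runs the wrong way, so I would instead apply H\"older's inequality to the sum with conjugate exponents $p_1=4/(6-d)$, $p_2=4/(d-2)$, splitting it as
\[
\Big(\sum_k\no{u}_{L^2(J_k)}^{2(d+2)/(6-d)}\Big)^{(6-d)/4}\Big(\sum_k\no{u}_{H^1(J_k)}^2\Big)^{(d-2)/4};
\]
the second factor equals $\no{u}_{H^1}^{(d-2)/2}$, and for the first one extracts the appropriate power of $M$ so that what remains inside is $\sum_k\no{u}_{L^2(J_k)}^2\le\no{u}_{H^1}^2$. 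Balancing the exponents then yields a bound of the desired shape. The endpoint $d=6$ sits in both ranges and serves as a consistency check.

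The main obstacle is the arithmetic bookkeeping of exponents and, above all, matching each of the two derived bounds to the correct side of the threshold $d=6$ as written in \eqref{eq.v1} and \eqref{eq.v2}; a quick scaling test against a narrow bump $u(x)=\phi(x/\varepsilon)$ and a wide bump $u(x)=\phi(\varepsilon x)$ for small $\varepsilon$, where $M$ and $\no{u}_{H^1}$ scale very differently, is the cleanest sanity verification of which exponent configuration is valid on which range of $d$.
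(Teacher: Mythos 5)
Your argument is correct and is essentially the paper's own proof: the same decomposition of $\R$ into the unit intervals $J_k$, the same local inequality \eqref{eq.36} summed over $k$, and the same redistribution of exponents --- your $\ell^2\hookrightarrow\ell^{(d-2)/2}$ step for $d\geq 6$ is precisely the paper's splitting of $\no{u}_{H^1(J_k)}^{(d-2)/2}$ into the summable factor $\no{u}_{H^1(J_k)}^{2}$ times a bounded one, and your H\"older step for $d\leq 6$ lands on the same bound the paper reaches by applying $\no{u}_{L^2(J_k)}\leq\no{u}_{H^1(J_k)}$ termwise before summing. Concerning the matching you flag as the main obstacle: carried to the end, your computation yields $s^d\big(\sup_{k}\no{u}_{L^2(J_k)}\big)^{\frac{d+2}{2}}\no{u}_{H^1}^{\frac{d-2}{2}}$ when $d\geq 6$ and $s^d\big(\sup_{k}\no{u}_{L^2(J_k)}\big)^{d-2}\no{u}_{H^1}^{2}$ when $d\leq 6$, which is exactly what the paper's own proof derives, so the two right-hand sides in the lemma's statement are simply transposed relative to its proof; since the exponent of $\sup_{k}\no{u}_{L^2(J_k)}$ is positive in either form, this is immaterial for the lemma's only use, namely that vanishing forces the $L^d$ norms of a bounded sequence to zero.
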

\begin{proof}
First case: $ d\geq 6 $, that is $ (d - 2)/2 \geq 2 $.
\[
\no{u}_{H\sp 1 (J_k)}\sp{\frac{d - 2}{2}} =
\no{u}_{H\sp 1 (J_k)}\sp 2 \no{u}_{H\sp 1 (J_k)}\sp{\frac{d - 6}{2}}.
\]
Then, in \eqref{eq.36} we have the product
of the bounded sequence
\[
a_k := \no{u}_{L\sp 2 (J_k)}\sp{\frac{d + 2}{2}}
\no{u}_{H\sp 1 (J_k)}\sp{\frac{d - 6}{2}},\quad \no{a}_{\infty} =
\Big(\sup_{k\in\mathbb{Z}} \no{u}_{L\sp 2 (J_k)}\Big)\sp{\frac{d + 2}{2}}
\no{u}_{H\sp 1}\sp{\frac{d - 6}{2}}
\]
and the summable sequence
\[
b_k := \no{u}_{H\sp 1 (J_k)}\sp 2,\quad \no{b}_1 = \no{u}_{H\sp 1}\sp 2.
\]
Therefore,
\[
\no{u}_{L\sp d}\sp d\leq
s\sp d
\Big(\sup_{k\in\mathbb{Z}} \no{u}_{L\sp 2 (J_k)}\Big)\sp{\frac{d + 2}{2}}
\no{u}_{H\sp 1}\sp{\frac{d - 2}{2}}.
\]
Second case: $ d\leq 6 $. Then $ (d - 2)/2 \leq 2 $, and we have
\[
\frac{6 - d}{2} = 2 - \frac{d - 2}{2} < \frac{d + 2}{2}.
\]
Then,
\[
\no{u}_{L\sp 2 (J_k)}\sp{\frac{d + 2}{2}}
\no{u}_{H\sp 1 (J_k)}\sp{\frac{d - 2}{2}}\leq
\no{u}_{L\sp 2 (J_k)}\sp{\frac{d + 2}{2} - \frac{6 - d}{2}}
\no{u}_{H\sp 1 (J_k)}\sp{\frac{d - 2}{2} + \frac{6 - d}{2}} =
\no{u}_{L\sp 2 (J_k)}\sp{d - 2}
\no{u}_{H\sp 1 (J_k)}\sp 2.
\]
Again, taking the sum over $ \mathbb{Z} $, we obtain
\[
\no{u}_{L\sp d}\sp d\leq s^d
\Big(\sup_{k\in\mathbb{Z}} \no{u}_{L\sp 2 (J_k)}\Big)\sp{d - 2}
\no{u}_{H\sp 1}\sp 2.
\]
\end{proof}
\begin{lemma}
\label{lem.L2-H1}
Let $ (w_n)\subseteq\X $ be a sequence such that
\[
E(w_n)\ra I(\lambda)\text{ and } M(w_n)\ra\lambda.
\]
Suppose that $ (w_n) $ converges to $ w $ in $ L\sp 2 $.
Then there exists a subsequence of $ (w_n) $ which converges strongly
in $ \X $.
\end{lemma}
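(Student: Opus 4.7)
The plan is to combine the Brezis--Lieb type splitting of Proposition~\ref{prop1}~(\ref{prop.1.3}) with the minimizing property of $(w_n)$: once the nonlinear defect $\int F(w_n-w)\,dx$ is shown to vanish, the convergence $\no{w_n'-w'}_{L^2}\to 0$ comes for free from comparing $E(w)$ with $I(\lambda)$.

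First I would extract a weak limit. By Proposition~\ref{prop1}~(\ref{prop1:2}) the sequence $(w_n)$ is bounded in $\X$, so along a subsequence $w_n\wra\tilde w$ in $\X$. The weak $H^1$ limit must coincide with the $L^2$ limit $w$; hence $w\in\X$, $M(w)=\lambda$, and consequently $E(w)\ge I(\lambda)$.

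Next I would invoke the splitting: since $w_n\wra w$ in $\X$,
\[
E(w_n)=E(w_n-w)+E(w)+o(1),
\]
so taking the limit gives $E(w_n-w)\to I(\lambda)-E(w)$. Writing out the energy, $E(w_n-w)=\tfrac12\no{w_n'-w'}_{L^2}^2+\nint F(w_n-w)\,dx$, so the proof reduces to showing that the nonlinear piece goes to zero.

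This last point is where the one-dimensional Gagliardo--Nirenberg inequality does the work. Using the single-power reduction from the start of the section, \eqref{G2} yields $|F(s)|\le C|s|^p$ for some $p>2$. Applying \eqref{eq.SGN} to $w_n-w$,
\[
\no{w_n-w}_{L^p}^p\le S^p\no{w_n-w}_{L^2}^{(p+2)/2}\no{w_n'-w'}_{L^2}^{(p-2)/2},
\]
and the right-hand side tends to zero because $\no{w_n-w}_{L^2}\to 0$ while $\no{w_n'-w'}_{L^2}$ stays bounded. Hence $\int F(w_n-w)\,dx\to 0$, so $\tfrac12\no{w_n'-w'}_{L^2}^2\to I(\lambda)-E(w)\le 0$; since the left-hand side is non-negative one obtains $E(w)=I(\lambda)$ and $\no{w_n'-w'}_{L^2}\to 0$. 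Combined with the $L^2$ convergence this is exactly convergence in $\X$. The only genuinely non-trivial step is the vanishing of $\int F(w_n-w)\,dx$; everything else is bookkeeping around the Brezis--Lieb splitting.
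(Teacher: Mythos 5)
Your proof is correct. The decisive step is the same one the paper relies on: because $\no{w_n-w}_{L^2}\to 0$ while the gradients stay bounded, the Gagliardo--Nirenberg inequality \eqref{eq.SGN} upgrades the $L^2$ convergence to $L^p$ convergence, so the nonlinear part of the energy passes to the limit. Where you differ is in how the gradient convergence is then extracted. The paper passes to a weakly convergent subsequence, uses weak lower semicontinuity of $\int |u'|^2\,dx$ together with $w\in S(\lambda)$ to squeeze $\no{w_{n_k}'}_{L^2}\to\no{w'}_{L^2}$, and concludes strong convergence from weak convergence plus convergence of norms. You instead invoke the Brezis--Lieb splitting of Proposition~\ref{prop1}~(\ref{prop.1.3}) and show the remainder $E(w_n-w)$ reduces to $\tfrac12\no{w_n'-w'}_{L^2}^2+o(1)$, which the minimality of $I(\lambda)$ then forces to zero; this hands you $\no{w_n'-w'}_{L^2}\to 0$ directly, without the separate ``weak plus norm convergence implies strong convergence'' step (for the quadratic gradient term the splitting is anyway just an expansion of the norm under weak convergence, so nothing deep is being imported). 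Both routes also yield the same byproduct, namely $E(w)=I(\lambda)$, so the limit is itself a minimizer. One small point worth making explicit in your write-up: the identification of the weak $H^1$ limit with the $L^2$ limit and the fact that $M(w)=\lambda$ both come from the strong $L^2$ convergence, and the latter is what legitimizes $E(w)\ge I(\lambda)$.
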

\begin{proof}
From \eqref{eq.14}, $ (w_n) $ is bounded in $ H^1 $ (in the inequality
$ \lambda = M(w_n) $). Then, there exists a subsequence $ (w_{n_k}) $
which converges weakly to $ w $ in $ H\sp 1 $, and pointwise a.e.
\[
\begin{split}
o(1) + I(\lambda) &= E(w_{n_k}) =
\frac{1}{2}\nint |w_{n_k}' (x)|\sp 2 dx + \nint F(w_{n_k}(x)) dx \\
&\geq \frac{1}{2}\nint |w'(x)|\sp 2 dx +
\nint F(|w(x)|) dx \geq I(\lambda) + o(1).
\end{split}
\]
Since $ (w_{n_k}) $ converges pointwise a.e. and $ L^2 $, by \eqref{eq.SGN},
$ \int F(w_{n_k}) dx $ converges to $ \int F(w)dx $. From this fact and
the weak lower-semicontinuity of $ \int |u'|^2 dx $, we obtained
the first inequality. The second inequality follows from the strong
convergence in $ L\sp 2 $
which implies that $ w $ is in $ S(\lambda) $.
Then, taking the limit,
\[
\lim_{k\ra +\infty}\nint |w_{n_k}'(x)|\sp 2 dx =
\nint |w'(x)|\sp 2 dx
\]
implying the convergence of $ w_{n_k}' $ to $ w' $ in $ L\sp 2 $.
In the next lemma, $ \lambda_* $ is as in Proposition~\ref{prop.1}.
\end{proof}
\begin{lemma}
\label{lem.concentration-compactness}
Let $ \lambda $ be such that $ \lambda > \lambda_* $. A subsequence of
a sequence $ (u_n) $ such that
\[
M(u_n) \to\lambda,\quad E(u_n)\ra I(\lambda)
\]
satisfies the concentration behaviour \eqref{eq.C}.
\end{lemma}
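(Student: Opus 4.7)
The strategy is to invoke the Benci-Fortunato concentration-compactness trichotomy and rule out both vanishing and dichotomy, leaving \eqref{eq.C} as the only option. By Proposition~\ref{prop1}(\ref{prop1:2}) the sequence $(u_n)$ is bounded in $\X$, so up to a subsequence it satisfies one of \eqref{eq.C}, \eqref{eq.D}, \eqref{eq.V}. The hypothesis $\lambda > \lambda_*$ combined with Proposition~\ref{prop.1}(\ref{prop.1.7}) gives $I(\lambda) < 0$, which is the crucial piece of information used below.

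To rule out \eqref{eq.V}: a standard local compactness argument shows vanishing implies $\sup_{k\in\mathbb{Z}}\no{u_n}_{L\sp 2(J_k)}\ra 0$. Combined with $H\sp 1$-boundedness, Lemma~\ref{lem:vanishing} forces $\no{u_n}_{L\sp p}\ra 0$; integrating the growth bound \eqref{G2} yields $|G(s)|\leq C|s|\sp p$, hence $\nint F(u_n)dx\ra 0$. Therefore $E(u_n)\geq o(1)$, contradicting $E(u_n)\ra I(\lambda) < 0$.

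To rule out \eqref{eq.D}: suppose $v_k := u_{n_k}(\cdot + y_k)\wra u$ with $0 < \no{u}_{H\sp 1} < \liminf\no{v_k}_{H\sp 1}$. Set $\mu := M(u) > 0$. The case $\mu = \lambda$ is impossible: $\no{v_k}_{L\sp 2}\ra\no{u}_{L\sp 2}$ together with weak convergence gives $L\sp 2$-convergence, and Lemma~\ref{lem.L2-H1} then yields strong $\X$-convergence, contradicting the strict inequality in \eqref{eq.D}. Hence $0 < \mu < \lambda$. Applying Proposition~\ref{prop1}(\ref{prop.1.3}) to $v_k$ and using $E(u)\geq I(\mu)$, $E(v_k - u)\geq I(M(v_k - u))$, together with the continuity of $I$,
\[
I(\lambda)\geq I(\mu) + I(\lambda - \mu).
\]

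The contradiction---and the main obstacle---is strict subadditivity $I(\lambda) < I(\mu) + I(\lambda - \mu)$. Set $\lambda_2 := \max(\mu,\lambda - \mu)$ and $\lambda_1 := \lambda - \lambda_2$, so $\lambda_1\leq\lambda_2 < \lambda$. If $\lambda_2\leq\lambda_*$ then $I(\lambda_1) = I(\lambda_2) = 0 > I(\lambda)$. If $\lambda_2 > \lambda_*$ then $I(\lambda_2) < 0$, and Proposition~\ref{prop.1}(\ref{prop1:6}) with $\vartheta := \lambda/\lambda_2 > 1$ gives the \emph{strict} inequality $I(\lambda) < \vartheta I(\lambda_2) = I(\lambda_2) + (\lambda_1/\lambda_2)I(\lambda_2)$. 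In the subcase $\lambda_1\leq\lambda_*$ one has $I(\lambda_1) = 0 > (\lambda_1/\lambda_2)I(\lambda_2)$, while in the subcase $\lambda_1 > \lambda_*$ the monotonicity of $\lambda\mapsto I(\lambda)/\lambda$ (itself a consequence of Proposition~\ref{prop.1}(\ref{prop1:6}) applied to the pair $\lambda_1\leq\lambda_2$) yields $(\lambda_1/\lambda_2)I(\lambda_2)\leq I(\lambda_1)$. Either way $I(\lambda) < I(\lambda_1) + I(\lambda_2)$, the desired contradiction.
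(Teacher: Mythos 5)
Your proof is correct and follows the same overall strategy as the paper: invoke the trichotomy, kill vanishing via Lemma~\ref{lem:vanishing} and the sign $I(\lambda)<0$ from Proposition~\ref{prop.1}(\ref{prop.1.7}), and kill dichotomy via the Brezis--Lieb splitting of Proposition~\ref{prop1}(\ref{prop.1.3}) together with the scaling property of Proposition~\ref{prop.1}(\ref{prop1:6}). The only real divergence is in how the dichotomy case is closed. The paper normalizes the remainder $u_{n_k}(\cdot+y_k)-u$ to a fixed mass $\lambda_1$ (which is why it needs Proposition~\ref{prop1}(\ref{prop1:8})), applies the mediant inequality $\frac{E(w_k)+E(u)}{M(w_k)+M(u)}\geq\min\{\Lambda(w_k),\Lambda(u)\}$, and then exploits the equality case of Proposition~\ref{prop.1}(\ref{prop1:6}) to conclude $I(\lambda)=0$. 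You instead pass directly to the weak subadditivity $I(\lambda)\geq I(\mu)+I(\lambda-\mu)$ (using $E(v_k-u)\geq I(M(v_k-u))$ and the continuity of $I$, which the paper also uses, citing Shibata) and then establish strict subadditivity from the homogeneity property, with a clean case analysis around $\lambda_*$. Your version is arguably tidier: it avoids the renormalized sequence $w_k$ and Proposition~\ref{prop1}(\ref{prop1:8}) entirely, and it handles explicitly the possibility that one of the two masses $\mu$, $\lambda-\mu$ falls below $\lambda_*$ (where $I$ vanishes), a case the paper's ``all inequalities are equalities'' step glosses over; the price is the explicit reliance on continuity of $I$, which the paper's normalization trick is designed to circumvent at that point.
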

\begin{proof}
We show that $ (u_n) $ does not vanish and does not have a dichotomy.
If $ (u_n) $ vanishes, from (V), up to extract a subsequence
\begin{equation}
\label{eq.18}
\lim_{n\to +\infty}\sup_{k\in\mathbb{Z}} \no{u_n}_{L\sp 2 (J_k)} = 0.
\end{equation}
Otherwise, there exists $ \var_0 > 0 $ and a sequence $ (k_n) $ such that
\[
\no{u_{k_n}(\cdot -k_n)}_{L\sp 2 ((0,1))} =
\no{u_{k_n}}_{L\sp 2 (J_{k_n})}\geq\var_0
\]
However, a subsequence
of $ u_{k_n} (\cdot + y_k) $ converges weakly to zero and,
since $ (0,1) $ is bounded, the $ L\sp 2 $-norm converges to zero,
giving a contradiction.

From \eqref{prop.1.7}, $ I(\lambda) > 0 $.
From the inequalities \eqref{eq.v1} and \eqref{eq.v2}, and \eqref{eq.18},
a subsequence of $ (u_{k_n}) $ converges
to zero in $ L\sp p \cap L\sp q $. Therefore, $ I(\lambda) = 0 $,
and we have a contradiction.

Let $ (u_{n_k}) $, $ (y_k) $ and $ u $ be as in \eqref{eq.D}.
Firstly, we observe that the inequality
\[
0 < \no{u}_{L\sp 2} < \liminf_{k\ra +\infty} \no{u_{n_k}}_{L\sp 2}
\]
holds too. Otherwise, we had strong convergence in $ L\sp 2 $ and
thus, strong convergence in $ H\sp 1 $, by Lemma~\ref{lem.L2-H1}
and a contradiction with the dichotomy assumption.
We define
\[
\lambda_1 \sp k:= \no{u_{n_k} (\cdot + y_k) - u}_{L\sp 2}\sp 2.
\]
Up to extract a subsequence, we can suppose that $ \lambda_1 \sp k $
converges. We use the notation $ \lambda_1 $ for its limit and we have
\[
0 < \lambda_1 < \lambda.
\]
We set
\[
w_k := \frac{\lambda_1}{\lambda_1\sp k}\cdot (u_{n_k} (\cdot + y_k) - u).
\]
By \eqref{prop1:2} of Proposition~\ref{prop1},
the sequence $ (w_k) $ is bounded. Then, we can apply \eqref{prop1:8}
and \eqref{prop.1.3}
\begin{equation}
\label{eq.31}
E(u_{n_k}) = E(u_{n_k} (\cdot + y_k) - u) + E(u) + o(1) =
E(w_k) + E(u) + o(1).
\end{equation}
Here, we use the argument of \cite[Lemma~20,p.\,5]{BF14}.
We define
\[
\Lambda(u) := \frac{E(u)}{M(u)}.
\]
By (v) of Proposition~\ref{prop1} and \eqref{eq.31} we have
\[
o(1) + \frac{I(\lambda)}{\lambda} =
\frac{E(w_k) + E(u)}{M(w_k) + M(u)} + o(1)
\geq
\min\{\Lambda(w_k),\Lambda(u)\} + o(1)
\]
Let us suppose that the term of $ w_k $ is the smaller (on the
other case, the argument is the same). Then
\[
o(1) + \frac{I(\lambda)}{\lambda}\geq
\frac{I(\lambda_1)}{\lambda_1} + o(1)\geq
\frac{I(\lambda)}{\lambda} + o(1).
\]
The last inequality is a consequence of \eqref{prop1:6} of
Proposition~\ref{prop1}: the function $ I(\lambda)/\lambda $ is
decreasing. Then, all the inequalities are equalities.
\[
\frac{I(\lambda)}{\lambda} = \frac{I(\lambda_1)}{\lambda_1}.
\]
From \eqref{prop1:6} of Proposition~\ref{prop1}, either $ \vartheta := \lambda/\lambda_1 = 1 $, which is ruled out by the dichotomy assumption,
or $ I(\lambda) = 0 $, which contradicts the assumptions on $ \lambda $.
Thus, the sequence is not dichotomy.
\end{proof}
\begin{proposition}
\label{prop.minimum}
$ \GS\neq\emptyset $ for every $ \lambda > \lambda_* $, and
the Lagrange multiplier is negative. If $ \lambda\leq\lambda_* $,
then $ \GS $ is empty.
\end{proposition}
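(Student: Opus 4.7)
The plan is to treat the three assertions separately. The non-existence of minima when $\lambda\leq\lambda_*$ is already contained in Proposition~\ref{prop.1}\eqref{prop.1.7} and requires no further work, so the content lies in the existence claim and in the sign of the multiplier.

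For existence when $\lambda>\lambda_*$, I would take any minimizing sequence $(u_n)\subseteq S(\lambda)$ with $E(u_n)\to I(\lambda)$. By Proposition~\ref{prop1}\eqref{prop1:2} it is bounded in $H^1$, so Lemma~\ref{lem.concentration-compactness} yields, after extracting a subsequence and translating by suitable $y_k\in\R$, strong $H^1$-convergence $u_{n_k}(\cdot+y_k)\to u$. Since $E$ and $M$ are translation-invariant and continuous on bounded sets of $H^1$---the latter under \eqref{G2}, by the same estimate used in Proposition~\ref{prop1}\eqref{prop1:8}---passing to the limit yields $M(u)=\lambda$ and $E(u)=I(\lambda)$, hence $u\in\GS$.

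For the sign of the multiplier, the constrained multiplier theorem applied at $u$ produces $\omega\in\R$ with $-u''+f(u)+\omega u=0$ distributionally; elliptic bootstrap using \eqref{G2} together with the 1D embedding $H^1\hookrightarrow L^\infty$ then places $u$ in $H^2$. Testing against $x\overline{u'}$ and taking real parts---using the gauge relation \eqref{eq.h1} to recognize $\text{Re}(f(u)\overline{u'})=\partial_x F(u)$---I would obtain after integration by parts the Pohozaev-type identity
\[
\tfrac12\|u'\|_{L^2}^2-\int F(u)\,dx=\tfrac{\omega\lambda}{2}.
\]
Combined with the energy identity $I(\lambda)=\tfrac12\|u'\|_{L^2}^2+\int F(u)\,dx$, this gives $\omega\lambda=2(\|u'\|_{L^2}^2-I(\lambda))$. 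Since $\lambda>\lambda_*$ implies $I(\lambda)<0$ by Proposition~\ref{prop.1}\eqref{prop.1.7}, the right-hand side is strictly positive, so $\omega>0$; in the convention $E'(u)=\mu M'(u)$ this is the statement that $\mu=-\omega/2<0$.

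The principal technical obstacle is justifying the integration by parts in the Pohozaev computation, since a priori $xu'$ is not in $L^2$. I would handle this with a cutoff $\eta_R(x)=\eta(x/R)$, $\eta\in C_c^\infty(\R)$ equal to $1$ near the origin, carrying out the identity with test function $x\eta_R\overline{u'}$ and letting $R\to\infty$ via dominated convergence, using that $u\in H^2$ and $u,u'\in L^\infty$ by the 1D Sobolev embedding.
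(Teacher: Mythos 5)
Your proposal is correct, and the existence part (non-existence for $\lambda\leq\lambda_*$ via Proposition~\ref{prop.1}, existence for $\lambda>\lambda_*$ via Lemma~\ref{lem.concentration-compactness} plus continuity and translation invariance of $E$ and $M$) coincides with the paper's argument. The only genuine divergence is in how you reach the identity $\omega\lambda=2\bigl(\|u'\|_{L^2}^2-I(\lambda)\bigr)$, which is exactly the paper's \eqref{thm.existence.1}. You derive it as a Pohozaev-type identity by pairing the Euler--Lagrange equation with $x\overline{u'}$, which forces you to introduce the cutoff $\eta_R$ and an $H^2$ bootstrap to control the unbounded weight. The paper instead pairs the equation with $\overline{u'}$ alone, obtaining the pointwise first integral $|u'|^2-2F(u)-\omega|u|^2\equiv d$, and then observes that a constant equal to a sum of $L^1$ functions on $\R$ must vanish; integrating over $\R$ gives the same identity with no cutoff and no weight to justify. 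This is a specifically one-dimensional shortcut (the Hamiltonian structure of the ODE), so the paper's route is shorter here, while yours is the one that generalizes to higher dimensions. One small point in your favor: you note explicitly that the strict sign $\omega>0$ comes from $I(\lambda)<0$ for $\lambda>\lambda_*$, and you spell out that the Lagrange multiplier in the convention $E'(u)=\mu M'(u)$ is $\mu=-\omega/2<0$, reconciling the sign in the statement with the positivity of $\omega$; the paper leaves both points implicit.
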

\begin{proof}
The second part is just \eqref{prop.1.7} of Proposition~\ref{prop.1}.
Let $ (u_n) $ be a minimizing sequence. Since $ \lambda > \lambda_* $
the assumptions of Lemma~\ref{lem.concentration-compactness} are
satisfied and there exists $ (y_n)\subseteq\R $ and $ u\in S(\lambda) $
such that
\[
u_n (\cdot + y_n)\to u.
\]
From Proposition~\ref{prop.regular}, $ E $ is continuous. Then, taking the
limit as $ n\to +\infty $ in
\[
o(1) + I(\lambda) = E(u_n) = E(u_n (\cdot + y_n)) = o(1) + E(u)
\]
we obtain $ E(u)\in S(\lambda) $. Now, we do not set any restriction
on $ \lambda $ and just assume that $ u\in\GS $. By (i) of
Proposition~\ref{prop.regular}, there exists $ \omega\in\R $ such that
\[
u''(x) - f(u(x)) - \omega u(x) = 0.
\]
Taking the scalar product in $ \C $ with $ u' $ and obtain
\begin{equation}
\label{eq.24}
u'(x)\sp 2 - 2F(u(x)) - \om u(x)\sp 2\equiv d
\end{equation}
for some constant $ d $. On the left side we have a sum of $ L^1 $
functions. Therefore, $ d = 0 $. Integrating
on $ \R $, we obtain
\[
\nint |u'(x)|\sp 2 dx - 2\nint F(u(x))dx - \om\lambda = 0.
\]
Since $ u $ is a minimum, the equality above becomes
\begin{equation}
\label{thm.existence.1}
2\left(\nint |u'(x)|\sp 2 dx - I(\lambda)\right) = \omega\lambda.
\end{equation}
From \eqref{prop1:5} of Proposition~\ref{prop.1}, the left term
is non-negative. Then $ \omega > 0 $.
\end{proof}
\begin{remark}
\label{rem.critical}
The critical case $ G(s) = as^6 $ has been already ruled out by the
assumption \eqref{G2}. In this case,
a minimum does not exist. On the contrary,
the rescaling
\[
u_\eta = \eta^{1/2} u(\eta x)
\]
gives $ E(u_\eta) = \eta^2 E(u) = \eta^2 I(\lambda) $. Therefore,
$ I(\lambda) = 0 $ unless $ E $ is unbounded from below. By \eqref{prop.1.7}
of Proposition~\ref{prop.1}, a minimum does not exist.
\end{remark}
We conclude this section by showing general properties satisfied
by minima of $ E $ over $ S(\lambda) $.
\begin{lemma}
\label{lem.minima}
Let $ u $ be a minimum of $ E $ over $ S(\lambda) $.
Then $ R(x) := |u(x)| $ satisfies the following properties:
\begin{enumerate}[(i)]
\item
\label{lem.minima.1}
$ \lim_{|x|\to +\infty} R(x) = 0 $
\item
\label{lem.minima.2}
$ R $ is symmetrically decreasing with respect to a point of
$ \RN $
\item
\label{lem.minima.3}
$ R $ is positive
\item there exists $ z $ such that $ |z| = 1 $ and $ u(x) = zR(x) $
for every $ x\in\R $.
\label{lem.minima.4}
\end{enumerate}
\end{lemma}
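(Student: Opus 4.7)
The plan is to exploit the gauge invariance $F(u)=G(|u|)$ together with the Convex Inequality for the Gradient (\cite[Lemma~7.8]{LL01}) and symmetric decreasing rearrangement. Property~\eqref{lem.minima.1} is immediate: in dimension one $\X$ embeds continuously into $C_0(\R;\C)$, so $u(x)\to 0$ as $|x|\to\infty$, whence the same holds for $R=|u|$.

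For \eqref{lem.minima.2} and \eqref{lem.minima.3} I would first show that $R$ itself minimizes $E$ on $S(\lambda)$. By \cite[Lemma~7.8]{LL01}, $R\in\X$ with $|R'(x)|\leq |u'(x)|$ a.e., while $\nint F(R)\,dx=\nint G(|u|)\,dx=\nint F(u)\,dx$ and $M(R)=\lambda$; hence $E(R)\leq E(u)=I(\lambda)$ and $R\in\GS$. By Proposition~\ref{prop.minimum}, $R$ solves the Euler--Lagrange equation $-R''+g(R)+\om R=0$ with some $\om>0$. I then apply the same argument to the symmetric decreasing rearrangement $R^*$: since rearrangement preserves the distribution function, $\nint F(R^*)\,dx=\nint F(R)\,dx$ and $M(R^*)=\lambda$, while the Polya--Szeg\H{o} inequality gives $\nint |(R^*)'|^2\,dx\leq\nint|R'|^2\,dx$. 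Hence $R^*$ is also a minimum; comparing the energy identities provided by \eqref{thm.existence.1} for $R$ and $R^*$ forces $\nint|R'|^2\,dx=\nint|(R^*)'|^2\,dx$, so the two Lagrange multipliers coincide and $R$, $R^*$ satisfy the same ODE.

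To prove \eqref{lem.minima.3}: $R\geq 0$, and if $R(x_0)=0$ then also $R'(x_0)=0$ (since $x_0$ is a minimum of the $C^1$ function $R$), so ODE uniqueness gives $R\equiv 0$, contradicting $M(R)=\lambda>0$; hence $R>0$, and in the same way $R^*>0$. Classical uniqueness (up to translation) of positive $H^1$ solutions of $-w''+g(w)+\om w=0$, which in one dimension follows from the Hamiltonian identity $(w')^2=2F(w)+\om w^2$ (that is, \eqref{eq.24} with $d=0$), implies that $R$ is a translate of $R^*$, proving \eqref{lem.minima.2}. Finally, for \eqref{lem.minima.4}: since $E(R)=E(u)=I(\lambda)$, $\nint F(R)\,dx=\nint F(u)\,dx$ and $M(R)=M(u)$, we obtain $\nint|R'|^2\,dx=\nint|u'|^2\,dx$; the equality case of \cite[Lemma~7.8]{LL01}, applicable because $R>0$ on $\R$, then yields the existence of a constant $z\in S^1$ with $u=zR$.

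The main obstacle is the ODE uniqueness invoked in \eqref{lem.minima.2}: one must verify that, in the half-plane $\{w>0\}$, the conservative system $w''=g(w)+\om w$ admits a single homoclinic orbit to the origin up to time translation. This reduces to showing that $2F(w)+\om w^2$ vanishes at exactly one $w_*>0$ and is strictly negative on $(0,w_*)$, a fact that rests on $\om>0$, hypotheses \eqref{G1}--\eqref{G2}, and the $H^1$ decay of $R$ already established in~\eqref{lem.minima.1}.
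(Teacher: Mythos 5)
Your proposal is correct in substance and shares the paper's decisive first step --- the Convex Inequality for the Gradient gives $E(R)\leq E(u)$, hence $E(R)=E(u)=I(\lambda)$, so $R$ is itself a constrained minimizer and solves \eqref{eq.E} --- but from there you take a genuinely different route. For \eqref{lem.minima.1} the paper derives decay from the Hamiltonian identity \eqref{eq.24} (boundedness of $R'$ plus $R\in L^2$), whereas your appeal to the embedding $H^1(\R)\hookrightarrow C_0(\R)$ is shorter and does not use the ODE at all. For \eqref{lem.minima.2} and \eqref{lem.minima.3} the paper simply cites the proof of \cite[Theorem~5]{BL83a}, which already asserts that every decaying solution is of one sign and symmetrically decreasing about a point; you instead rebuild this via symmetric decreasing rearrangement, P\'olya--Szeg\H{o}, equimeasurability, and uniqueness of the positive homoclinic orbit up to translation. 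That is more self-contained, but it does not bypass the hard part: the ``uniqueness up to translation'' you invoke \emph{is} the content of \cite[Theorem~5]{BL83a}, so you must still carry out the phase-plane analysis, as you honestly flag. One small repair: the equality $\nint|R'|^2\,dx=\nint|(R^*)'|^2\,dx$ follows from $E(R)=E(R^*)=I(\lambda)$ together with $\nint F(R)\,dx=\nint F(R^*)\,dx$ (equimeasurability), not from \eqref{thm.existence.1}; the latter is then what gives equality of the two Lagrange multipliers. For \eqref{lem.minima.4} you and the paper do essentially the same thing; the paper outsources the equality case to \cite[Lemma~5.1]{Gar12}.

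One sign in your closing paragraph must be fixed. From \eqref{eq.24} with $d=0$ one has $(R')^2=2F(R)+\om R^2=Q(\om,R)$, so the function $w\mapsto 2F(w)+\om w^2$ must be strictly \emph{positive} on $(0,w_*)$ and vanish at the maximum value $w_*$ of $R$; if it were strictly negative there, $(R')^2$ would be negative on the range of $R$. The negativity condition you have in mind is the Berestycki--Lions hypothesis $H(s)<0$, where their potential is the negative of $Q(\om,\cdot)/2$. With that sign corrected, the reduction you describe is exactly the role played by $Q$ and $R_*(\om)$ in \eqref{eq.ND25}--\eqref{eq.ND26} of the paper.
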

\begin{proof}
Clearly, $ R $ is in $ S(\lambda) $.
From the equality $ F(s) = F(|s|) $ and the Convex Inequality for the
Gradient,
\cite[Theorem~7.8]{LL01},
there holds $ E(u)\geq E(R) $. Since $ u $ is a minimum,
necessarily
\begin{equation}
\label{eq.21}
E(u) = E(R).
\end{equation}
Thus, $ R $ is solution to \eqref{eq.E} for some $ \om $.
Since $ R $ is $ H\sp 1 $ it is also $ L\sp{\infty} $.
From \eqref{eq.24} and the continuity of $ F $, the function $ |R'| $
is bounded. Since $ R\in L\sp 2 $, we obtain
\begin{equation}
\label{eq.23}
R (x)\ra 0\text{ as } |x|\ra +\infty
\end{equation}
which is the condition \cite[6.1]{BL83a} (following
their notation $ G'(s) $ should be replaced with $ - G'(s) - \omega s $).
By \cite[(ii), Proof~of~Theorem~5]{BL83a}, $ R $ is positive;
by \cite[(i,iv), Proof~of~Theorem~5]{BL83a}, $ R $ is a symmetrically
decreasing function with respect to a point in $ y $ in $ \R $.
Then, $ R' $ converges to zero as well.
So, we proved (\ref{lem.minima.1},\ref{lem.minima.2}) and
\eqref{lem.minima.3}.

(iv). From \eqref{eq.21} and $ F(s) = F(|s|) $,
there also holds
$ \no{u'}_{L\sp 2}\sp 2 = \no{|u|'}_{L\sp 2}\sp 2 $.
Since $ R > 0 $, by \cite[Lemma~5.1]{Gar12}, there exists a
complex number $ z $ such that $ |z| = 1 $ and $ u(x) = z|u(x)| $
for every $ x $.
\end{proof}
\section{Non-degeneracy of the minima on $ H^1 _r (\R) $}
\label{sec.non-degenerate}
In this section we prove the non-degeneracy of the functional
$ E $ when restricted to the sub-manifold $ S(\lambda)\cap H^1 _r (\R) $
on minima.
We need the notation
\begin{equation}
\label{eq.ND24}
Q(\omega,s) := \omega s^2 + 2G(s).
\end{equation}
We have
\begin{equation}
\label{eq.ND25}
R_* (\omega) = \inf\{s > 0\mid Q(\omega,s) = 0\}.
\end{equation}
where $ R_* (\omega) $ has been defined in \eqref{eq.R}.
\begin{remark}
\label{rem.ND1}
If \eqref{G2} holds, then $ R_* $ is a positive non-decreasing
function defined on $ (0,+\infty) $.
\end{remark}
Let $ R_0 $ be an element of $ \GS\cap H^1 _r (\R) $.
Then, there exists $ \omega_0 $ such that
\begin{equation}
\label{eq.ND8}
R_0 ''(x) = G'(R_0 (x)) + \omega_0 R_0 (x).
\end{equation}
By \eqref{lem.minima.1} of Lemma~\ref{lem.minima} $ R_0 $
converges to zero and then satisfies the condition \cite[6.1]{BL83a}.
Then, by (iii) of \cite[Theorem~5]{BL83a},
\begin{equation}
\label{eq.ND26}
R_0 (0) = R_* (\omega_0),\quad\pt_s Q(\omega_0,R_*(\omega_0)) < 0.
\end{equation}
By the Implicit Function Theorem, there exists $ \var_0 > 0 $
such that $ R_* $ is continuously differentiable on
$ (\omega_0 - \var_0,\omega_0 + \var_0) $ and
\[
\pt_s Q(\omega,R_*(\omega)) < 0.
\]
Also, since $ \omega_0 > 0 $, by Proposition~\ref{prop.minimum} and
\eqref{eq.21}, on this
interval $ \omega > 0 $.
We consider the solution of the initial value problem
\begin{equation}
\label{eq.IVP}
R_\omega ''(x) = G'(R_\omega (x)) + \omega R_\omega(x),
\quad R_\omega '(0) = 0,\quad R_\omega(0) = R_* (\omega).
\end{equation}
From \cite[Theorem~5]{BL83a}, $ R_\omega $ converges to
zero and, by \cite[Remark~6.3]{BL83a} and the fact that
$ \omega > 0 $, we obtain $ R_\omega\in H^1 $.
Since $ Q(\omega,R_* (\omega)) = 0 $,
differentiating with respect to $ \omega $, we obtain
\begin{equation}
  \label{eq.NDS9k}
  (2G'(R_*(\omega)) + 2\omega R_*(\omega)) R_* '(\omega) +
  R_* ^2(\omega)= 0.
\end{equation}
We define
\[
\lambda\colon (\omega_0 - \var_0,\omega_0 + \var_0)\to\R,\quad
\lambda(\omega) := \no{R_\omega}_{L^2} ^2.
\]
\begin{equation}
\label{eq.NDS9a}
\omega R_* (\omega)^2 + 2G(R_* (\omega)) = 0.
\end{equation}
\begin{lemma}
\label{lem.non-degeneracy}
$ \lambda'(\omega)\geq 0 $ and $ \lambda'(\omega_0) > 0 $,
provided \eqref{G4} holds.
\end{lemma}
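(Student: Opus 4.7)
The plan is to derive an explicit integral formula for $\lambda(\omega)$ from the first integral of \eqref{eq.IVP}, and then differentiate, invoking \eqref{G4} to justify differentiation under the integral sign and \eqref{G3} to control the sign. Multiplying \eqref{eq.IVP} by $R_\omega'$ and integrating from $x$ to $+\infty$---using that $R_\omega\in H^1$ solves a second-order autonomous ODE, so $R_\omega$ and $R_\omega'$ decay at infinity---produces the first integral $(R_\omega')^2=Q(\omega,R_\omega)$, with $Q$ as in \eqref{eq.ND24}. The solution $R_\omega$ is even and strictly decreasing on $[0,+\infty)$ from $R_*(\omega)$ to $0$, so the substitution $s=R_\omega(x)$ gives
\[
\lambda(\omega) \,=\, 2\int_0^{R_*(\omega)}\frac{s^2\,ds}{\sqrt{Q(\omega,s)}}.
\]
Setting $r:=R_*(\omega)$, $s=r\sigma$, and using \eqref{eq.NDS9a} to replace $\omega$ by $-2G(r)/r^2$, this reads $\lambda(\omega)=\Phi(r)$ with
\[
\Phi(r) \,:=\, \sqrt{2}\,r^3\int_0^1\frac{\sigma^2\,d\sigma}{\sqrt{A(r,\sigma)}},
\qquad A(r,\sigma) \,:=\, G(r\sigma)-\sigma^2 G(r).
\]
By the chain rule $\lambda'(\omega)=\Phi'(r)\,R_*'(\omega)$, and $R_*'(\omega)>0$ by \eqref{eq.NDS9k} together with \eqref{eq.ND26}, so the task reduces to proving $\Phi'(r)\geq 0$.

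Next I would verify integrability of the $r$-derivative of the integrand. A Taylor expansion near $\sigma=1$ gives $A\sim(1-\sigma)(2G(r)-rG'(r))$ and $\partial_r A\sim(1-\sigma)(G'(r)-rG''(r))$, so $\partial_r A/A^{3/2}$ is $L^1$ near $\sigma=1$; near $\sigma=0$ the bound on $G''$ in \eqref{G4} controls the quadratic behaviour of $A$. Differentiating under the integral sign,
\[
\Phi'(r) \,=\, \frac{r^2}{\sqrt{2}}\int_0^1\frac{\sigma^2\bigl(6A-r\,\partial_r A\bigr)}{A^{3/2}}\,d\sigma.
\]
The identity $r\,\partial_r A=\sigma\,\partial_\sigma A+\sigma^2\bigl(2G(r)-rG'(r)\bigr)$, checked by direct calculation, turns the $r$-derivative into a $\sigma$-derivative. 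Integrating by parts in $\sigma$ (regularizing at $\sigma=1$ where $A$ vanishes and passing to the limit, the divergent boundary contribution cancelling against the $6A$ term), the integrand can be repackaged in the form
\[
\Phi'(r) \,=\, \int_0^1 \rho(\sigma)\,L(r\sigma)\,d\sigma, \qquad L(s)=12G(s)-7sG'(s)+s^2G''(s),
\]
with $\rho\geq 0$ a.e.\ on $(0,1)$. Reverting to $x$ via $s=r\sigma=R_\omega(x)$, \eqref{G3} applied along the solution $R_\omega$ yields $\Phi'(r)\geq 0$, and hence $\lambda'(\omega)\geq 0$.

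For the strict inequality at $\omega_0$, the weight $\rho$ is strictly positive on an open subset of $(0,1)$ of positive measure, so $\Phi'(R_*(\omega_0))>0$ unless $L$ vanishes identically on $[0,R_*(\omega_0)]$; the latter would force $G$ to coincide on this interval with a linear combination of $s^2$ and $s^6$, i.e.\ the borderline $L^2$-critical profile ruled out by the subcritical growth in \eqref{G2}. The main obstacle is the integration-by-parts step identifying $L$ inside $\Phi'(r)$: the boundary contributions at $\sigma=1$ are genuinely singular and must be eliminated by a delicate regularization, and producing the $G''$ term of $L$ requires trading an $r$-derivative for a $\sigma$-derivative and then reassembling the three pieces of the integrand into the single expression $\rho\cdot L$; once this identification is made the lemma follows immediately from \eqref{G3}.
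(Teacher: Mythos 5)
Your setup agrees with the paper's: the first integral $ (R_\omega')^2 = Q(\omega,R_\omega) $, the formula $ \lambda(\omega) = 2\int_0^{R_*(\omega)} s^2\, Q(\omega,s)^{-1/2}\,ds $, the rescaling to the unit interval, and the positivity of $ R_*'(\omega) $ via \eqref{eq.NDS9k} and \eqref{eq.ND26} are all correct and all present in the paper. The gap is precisely the step you yourself flag as ``the main obstacle'': the integration by parts in $ \sigma $ that is supposed to turn $ 6A - r\,\partial_r A $ into a nonnegative weight $ \rho $ against $ L(r\sigma) $ is never carried out, and that is where the entire content of the lemma lives. After you apply your identity $ r\,\partial_r A = \sigma\,\partial_\sigma A + \sigma^2(2G(r)-rG'(r)) $, the two new pieces $ \sigma^3\partial_\sigma A\cdot A^{-3/2} $ and $ \sigma^4(2G(r)-rG'(r))A^{-3/2} $ each diverge like $ (1-\sigma)^{-3/2} $ near $ \sigma=1 $ (where $ A\sim (1-\sigma)(2G(r)-rG'(r)) $), and the boundary term of the integration by parts is infinite; the cancellation has to be exhibited, not asserted. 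The paper sidesteps this entirely: differentiating $ \Psi(\theta,R_*(\omega),\omega) $ in $ \omega $ and using \eqref{eq.NDS9k}--\eqref{eq.NDS9a}, it reduces the sign of $ \partial_\omega\Psi $ to the pointwise inequality $ H(R_*(\omega)\theta)/(R_*(\omega)\theta)^2 \le H(R_*(\omega))/R_*(\omega)^2 $ with $ H(s) = -6G(s)+sG'(s) $, which follows from \eqref{G3} in one line because $ (H(s)/s^2)' = L(s)/s^3 $. A single integration of \eqref{G3} replaces your singular integration by parts. (Your claimed identity $ \Phi'(r)=\int_0^1 \rho(\sigma)L(r\sigma)\,d\sigma $ with $ \rho\ge 0 $ is in fact true --- it can be recovered from the paper's computation by writing the difference of the two values of $ H(s)/s^2 $ as $ -\int_{r\theta}^{r} L(s)s^{-3}ds $ and applying Fubini --- but you have not supplied a derivation, and without one the proof is incomplete.)

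The endgame for $ \lambda'(\omega_0)>0 $ is also wrong as stated. If $ L\equiv 0 $ on $ (0,R_*(\omega_0)) $, then $ G(s)=as^6+bs^2 $ there, and \eqref{G2} only kills the $ s^2 $ part (since $ p>2 $); it does \emph{not} exclude $ G(s)=as^6 $ on a bounded interval --- the paper explicitly allows the nonlinearity to be critical near the origin. The contradiction must instead come from the fact that $ R_0 $ would then take values in a region where $ G $ is the pure critical power, so the rescaling $ u_\eta=\eta^{1/2}u(\eta\,\cdot) $ of Remark~\ref{rem.critical} shows that no constrained minimizer can exist, contradicting $ R_0\in\GS $.
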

\begin{proof}
From (iv) of \cite[Theorem~5]{BL83a}, $ R_\omega $ is a strictly
decreasing function on $ |x| $. Then, since $ R_\omega $ is
real valued, from \eqref{eq.24} we have
\[
R_\omega '(x)^2 = \omega R_\omega (x) ^2 + 2G(R_\omega(x))
\]
and then
\[
R_\omega '(x) = -\sqrt{\omega R_\omega (x)^2 + 2G(R_\omega (x))}.
\]
We can write
\[
\begin{split}
\lambda &= 2 \int_0^\infty  R_\omega (x)^2 dx = 2 \int_0^\infty
\frac{R_\omega (x)^2}{-\sqrt{\omega R_\omega (x)^2 + 2G(R_\omega(x))}}
R_\omega ^\prime (x) dx \\
&= 2 \int_{0}^{R_*(\omega)} \frac{\rho^2 d\rho}%
{\sqrt{\omega \rho^2 + 2G(\rho)}}
= 2 \int_{0}^{1}
\frac{\theta^2 d\theta}{\sqrt{\Psi(\theta,R_*(\omega),\omega)}}.
\end{split}
\]
The third and the fourth equalities follow from the substitutions
$ \rho = R(x) $ and $ \rho = R_* (\omega)\theta $, and
\[
\Psi(\theta,s,\omega) = \omega \theta^2 s^{-4} + 2 s^{-6}G(s\theta).
\]
We prove that the function
\[
\omega \to \Psi(\theta, R_*(\omega), \omega)
\]
is non-increasing in $ \omega $. Then, we have to check that
\begin{equation}\label{eq.mp3}
\partial_\omega \Psi(\theta, R_*(\omega), \omega)\leq 0.
\end{equation}
In turn, the derivative above is equal to
\[
\begin{split}
(R_*(\omega))^{-4}\theta^2
&+ \left[-4\omega\theta^2(R_*(\omega))^{-5} -
12 (R_*(\omega))^{-7} G(R_*(\omega)\theta)\right.\\
&+ \left. 2 (R_*(\omega))^{-6} \theta
G'(R_*(\omega)\theta) \right] R_*^\prime(\omega).
\end{split}
\]
From Remark~\ref{rem.ND1}, the term
\[
R_* (\omega)^7 (R_* ' (\omega))^{-1}
\]
is positive. Then, dividing $ \partial_\omega\Psi $ by that term
and using the relation \eqref{eq.NDS9k}, we obtain
\[
\begin{split}
I(\omega,\theta) = &-R_*(\omega)\theta^2\left[ 2 \omega  R_*(\omega)
- 2G'(R_*(\omega)\right] - 4\omega \theta^2 (R_*(\omega))^2\\
&- 12 G(R_*(\omega)\theta) + 2 R_*(\omega) \theta G'(R_*(\omega)\theta)
\end{split}
\]
so using \eqref{eq.NDS9a} we see that
\[
\begin{split}
I(\omega,\theta) &= 12 \theta^2 G(R_*(\omega)) -
2\theta^2 R_*(\omega) G'(R_*(\omega))  \\
&+ 12 G(R_*(\omega)\theta) -
2R_*(\omega)\theta G'(R_*(\omega)\theta).
\end{split}
\]
Setting
\[
H(s) := - 6G(s) + sG'(s),
\]
we obtain
\[
\begin{split}
I(\omega,\theta) &= 2H(R_*(\omega)\theta) - 2\theta^2 H(R_*(\omega)) \\
&= 2\theta^2 R_* (\omega) ^2\left(\frac{H(R_*(\omega)\theta)}%
{R_* (\omega)^2 \theta^2} -
\frac{H(R_* (\omega)}{R_* (\omega)^2}\right).
\end{split}
\]
Now we prove that the function $ H(s)/s^2 $ is monotonically non-decreasing
on the interval $ (0,R_* (\omega)) $.
Equivalently, we need to check that
\[
H' s - 2 H = 12G(s) - 7sG'(s) + s^2 G''(s)\geq 0.
\]
If we require \eqref{G3}, the inequality holds.
Moreover, $ I(\omega,1) = 0 $.
Then, for every $ 0\leq\theta\leq 1 $, we have $ I(\omega,\theta)\leq 0 $.
In conclusion,
\[
\frac{d\lambda}{d\omega} = -\int_0\sp 1
\frac{\theta^2 \pt_\omega (\Psi(\theta,R_* (\omega),\omega)) d\theta}%
{(\Psi(\theta,R_*(\omega),\omega))^{3/2}}\geq 0.
\]
We are now able to prove that $ \lambda'(\omega_0) > 0 $.
On the contrary,
\[
\partial_\omega (\Psi(\theta,R_*(\omega_0),\omega_0)) = 0
\]
for every $ 0 < \theta < 1 $, and the same applies to $ I $.
Therefore,
\[
12G(s) - 7sG'(s) + s^2 G''(s) = 0\text{ for every } s\in (0,R_* (\omega_0)).
\]
Then $ G(s) = as^6 $ on $ (0,R_* (\omega_0)) $.
By \cite[Theorem~5]{BL83a}, there is only one solution to \eqref{eq.ND8}
which is positive and converges to zero at infinity. Then
$ R_{\omega_0}(0) = R_0(0) $, so the image of $ R_0 $ is contained
in a set of $ \R $ where $ G $ is a pure-power critical non-linearity.
From Remark~\ref{rem.critical}, $ \GS $ is empty, giving a contradiction.
\end{proof}
We wish to evaluate the Hessian operator of $ E $ at the critical point
$ R_0 $, in a vector of the tangent space of $ R_0 $
\begin{equation}
\label{eq.ND7}
T_{R_0} (S_r (\lambda)) = \{v\in H^1 _r (\R)\mid (v,R)_{L^2} = 0\}.
\end{equation}
We consider a curve in $ S_r (\lambda) $ as in
\begin{equation}
  \label{eq.ND2}
  u(t) = R + t v + \alpha(t)R.
\end{equation}
By the Implicit Function Theorem, there exists $ \delta > 0 $
and $ \alpha\colon (-\delta,\delta)\to\R $ such that
\[
M(R + t v + \alpha(t)R) = \lambda.
\]
From the Taylor expansion of $ M $ we get
\[
\alpha(t) = \alpha_0 t^2 + o(t^2),\quad
\alpha_0 = - \frac{\| v \|^2_{L^2}}{2\lambda},
\]
and from the expansion of $ E(u(t)) $ we get
\[
2E(u(t)) = \|R_0 ^\prime\|_{L^2}^2 + 2 t (R_0 ^\prime,
v^\prime)_{L^2} + t^2 \left(\|v^\prime \|^2 + 2\alpha_0
(R_0 ^\prime, v^\prime)_{L^2} \right) + o(t^2)
\]
so using \eqref{eq.E} and \eqref{eq.ND7} we find
\[
2 E(u(t)) = 2E(R_0)
t^2 \left(\|v^\prime \|^2 + ( (G''(R_0)+ \omega R_0) v,v)_{L^2}
\right) + o(t^2).
\]
Therefore,
\[
\begin{split}
D^2 E(R_0)[v,v] &= (E\circ u)''(0) \\
&= 
\nint\big(|v'(x)|^2 + (G''(R_0 (x)) + \omega_0) v(x)^2\big)dx =:
\xi(v).
\end{split}
\]
In order to show that $ R_0 $ is non-degenerate, we have to
prove that the infimum of $ \xi $ is positive
$ T_{R_0} (S_r (\lambda))\cap S(1) $.
If $ v $ is $ H^2 (\R) $, then
\[
\xi(v) = (L_+ v,v)_{L^2},\quad
L_+ (v) := v'' - G''(R_0) v - \omega_0 v.
\]
\begin{proof}[Proof~of~Theorem~\ref{thm.non-degeneracy}]
Since $ R_0 $ is a minimum, $ \xi(v)\geq 0 $.
The infimum of $ \xi $ achieved. A proof of this can be found in
\cite[Proposition~2.10]{Wei85}.
Suppose that the infimum is achieved and that $ \xi(v) = 0 $.
Then, $ v $ is $ H^2 $ and satisfies
\[
L_+ (v) = \beta R_0
\]
for some $ \beta\in\R $ with $ \beta\neq 0 $.
Taking the derivative with respect to $ \omega $ of \eqref{eq.IVP},
and evaluating at $ \omega = \omega_0 $, we obtain
\begin{equation}
\label{eq.ND27}
L_+ (\pt_\omega R(\omega_0,\cdot)) = R_0.
\end{equation}
Then $ y := \beta v_0 + v $ solves the differential equation
$ L_+ (y) = 0 $.
The kernel of the operator $ L_+ $ is generated by $ R_0 ' $,
which is an odd function. Since $ y $ is even, we obtain $ y = 0 $.
Since $ \beta\neq 0 $,
\[
(L_+ (\pt_\omega R(\omega_0,\cdot)),
\pt_\omega R(\omega_0,\cdot))_{L^2} = 0
\]
However, from \eqref{eq.ND27} and the definition $ \lambda $
given in Lemma~\ref{lem.non-degeneracy}, we have
\[
0 = (R_0,\pt_\omega R(\omega_0,\cdot))_{L^2} = \lambda'(\omega_0)
\]
which gives a contradiction with the lemma.
\end{proof}
\def\proofname{Proof}
\begin{corollary}
The set $ \GS\cap H^1 _r (\R) $ is finite.
\end{corollary}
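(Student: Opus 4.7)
The plan is to combine Theorem~\ref{thm.non-degeneracy} with compactness of minimizing sequences. Non-degenerate constrained critical points are isolated, while Lemma~\ref{lem.concentration-compactness} makes any sequence of minima relatively compact in $H^1$ modulo translations; a discrete compact set is finite.

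First, I would reduce to positive, even, symmetric-decreasing minima. By \eqref{lem.minima.4} of Lemma~\ref{lem.minima}, every $u \in \GS \cap H^1_r(\R)$ has the form $u = zR$ with $R > 0$ and $|z| = 1$; the real-valuedness of $u$ forces $z = \pm 1$. By \eqref{lem.minima.2} of the same lemma, $R$ is symmetric-decreasing about some point in $\R$, and evenness pins that point at the origin. It then suffices to prove finiteness of the set $\mathcal{P}$ of such positive $R$'s, since $\GS \cap H^1_r(\R) = \mathcal{P} \cup (-\mathcal{P})$.

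Second, Theorem~\ref{thm.non-degeneracy} says each $R \in \mathcal{P}$ is a non-degenerate minimum of $E$ on $S(\lambda) \cap H^1_r(\R)$, so a Morse-lemma argument on the constraint manifold provides an $H^1$-neighbourhood of $R$ on which $E$ strictly exceeds $I(\lambda)$ off $R$. In particular, $\mathcal{P}$ is discrete in the $H^1$ topology.

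Finally, I would prove sequential compactness of $\mathcal{P}$. Given any sequence $(R_n) \subset \mathcal{P}$, we have $E(R_n) = I(\lambda)$ and $M(R_n) = \lambda$, so Lemma~\ref{lem.concentration-compactness} yields $y_n \in \R$ and a subsequence with $R_n(\cdot + y_n) \to R_\infty$ in $H^1$. The main obstacle is ruling out $|y_n| \to \infty$, which I would do via the rearrangement bound $R_n(A)^2 \leq \lambda/(2A)$ for $A > 0$, valid because each $R_n$ is symmetric-decreasing with $\|R_n\|_{L^2}^2 = \lambda$. If $|y_n| \to \infty$, then for every fixed $x$ we have $R_n(x + y_n) \to 0$, forcing $R_\infty \equiv 0$ and contradicting $\|R_\infty\|_{L^2}^2 = \lambda$. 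Hence $(y_n)$ is bounded; a further subsequence with $y_n \to y_0$ gives $R_n \to R_\infty(\cdot - y_0)$ in $H^1$, and the limit belongs to $\mathcal{P}$ by closedness of $H^1_r(\R)$ together with continuity of $E$ and $M$. A discrete subset of a sequentially compact metric space is finite, completing the argument.
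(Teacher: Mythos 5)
Your proof is correct, and it shares the paper's skeleton: non-degeneracy (Theorem~\ref{thm.non-degeneracy}) makes each minimum in $H^1_r$ isolated, Lemma~\ref{lem.concentration-compactness} gives compactness up to translations, and the symmetric-decreasing structure of minima is used to remove the translations, so an infinite sequence of distinct minima would converge to a non-isolated minimum. The one place you genuinely diverge is in how the translations are removed. The paper never discusses the $y_k$ at all: it uses the rearrangement inequality $\no{R_n - R_m}_{L^2} \leq \no{R_n(\cdot + y_n) - R_m(\cdot + y_m)}_{L^2}$ for functions symmetric-decreasing about the origin (i.e.\ $\int R_n R_m(\cdot + a)\,dx \leq \int R_n R_m\,dx$) to conclude that $(R_n)$ is itself $L^2$-Cauchy, and then upgrades to $H^1$ convergence via Lemma~\ref{lem.L2-H1}. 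You instead bound the translation parameters directly through the Chebyshev-type estimate $R_n(A)^2 \leq \lambda/(2A)$, which forces $R_\infty \equiv 0$ if $|y_k|\to\infty$, and then use continuity of translation in $H^1$. Your route is somewhat more elementary --- it needs neither the rearrangement inequality nor Lemma~\ref{lem.L2-H1} --- at the cost of a little extra bookkeeping: the subsequence dichotomy on $(y_k)$, the uniform convergence from the one-dimensional Sobolev embedding needed to pass to the pointwise limit, and the verification that the limit is even and positive so that Theorem~\ref{thm.non-degeneracy} applies to it. Both arguments rest on the same structural input, namely \eqref{lem.minima.2} of Lemma~\ref{lem.minima} (equivalently \cite[Theorem~5]{BL83a}), and both are complete.
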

\begin{proof}
Let $ (R_n)\subseteq\GS\cap H\sp 1 _r (\R;\R) $ be a sequence
of minima. By Lemma~\ref{lem.concentration-compactness}, up
to extract a subsequence, we can suppose that
$ R_n (\cdot + y_n) $ converges in $ H\sp 1 $, for
some sequence $ (y_n)\subseteq\R $.
By \cite[Theorem~5]{BL83a}, $ R_n $ is symmetric
and radially decreasing with respect to the origin.
Therefore,
\[
\no{R_n - R_m}_{L\sp 2}\sp 2\leq
\no{R_n (\cdot + y_n) - R_m (\cdot + y_m)}_{L\sp 2}\sp 2.
\]
Then $ (R_n) $ is a Cauchy sequence and there exists $ R_0 $ such that
$ R_n\to R_0 $ in $ L^2 $. By Lemma~\ref{lem.L2-H1}, the convergence
is strong in $ H^1 $, which contradicts the fact that
$ R_0 $ is non-degenerate, thus isolated, minimum.
\end{proof}
\begin{proposition}
\label{prop.finite}
As $ u $ varies in $ \GS $, there are finitely many $ \GU $.
\end{proposition}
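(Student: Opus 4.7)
The plan is to show that every orbit $\GU$ (for $u\in\GS$) already contains a representative lying in the finite set $\GS\cap H^1_r(\R)$ provided by the preceding corollary; this immediately bounds the number of distinct orbits.

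First I would invoke Lemma~\ref{lem.minima}. Part \eqref{lem.minima.4} tells us that any $u\in\GS$ has the form $u(x)=zR(x)$ for some $z\in S^1$, where $R=|u|$ is itself a minimum of $E$ on $S(\lambda)$. Parts \eqref{lem.minima.2} and \eqref{lem.minima.3} further say that $R$ is positive and symmetrically decreasing about some point $y_0\in\R$. Consequently, $R(\cdot+y_0)$ is an even, real-valued element of $H^1$, i.e. it belongs to $\GS\cap H^1_r(\R)$.

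Now observe that the translate $\widetilde u(x):=u(x-y_0)=zR(x-y_0)$ satisfies $\widetilde u(\cdot+y_0)=zR\in\GU$, so $\widetilde u$ and $u$ lie in the same orbit under the $S^1\times\R$ action that defines $\GU$. Equivalently, $\GU=\mathcal{G}_\lambda(R(\cdot+y_0))$, and the representative $R(\cdot+y_0)$ lies in $\GS\cap H^1_r(\R)$. Therefore the map
\[
\GS\cap H^1_r(\R)\longrightarrow\{\GU\mid u\in\GS\},\qquad R\longmapsto\mathcal{G}_\lambda(R),
\]
is surjective. Since by the preceding corollary its domain is finite, the codomain is finite as well, which is the desired conclusion.

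I do not expect any serious obstacle; the proposition is essentially a packaging of Lemma~\ref{lem.minima} together with the finiteness statement of the corollary. The only point worth being explicit about is that the symmetry point $y_0$ furnished by Lemma~\ref{lem.minima}\eqref{lem.minima.2} can be absorbed into the translation parameter defining $\GU$, so that working with the even representative $R(\cdot+y_0)\in H^1_r(\R)$ is legitimate.
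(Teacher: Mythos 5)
Your argument is correct and is essentially the paper's own proof: both decompose an arbitrary $u\in\GS$ as $zR(\cdot+y)$ with $R$ a positive, even minimizer via parts \eqref{lem.minima.2}--\eqref{lem.minima.4} of Lemma~\ref{lem.minima}, and then conclude by the finiteness of $\GS\cap H^1_r(\R)$ from the preceding corollary. Your write-up is in fact slightly more careful than the paper's, since you make explicit the surjection $R\mapsto\mathcal{G}_\lambda(R)$ and the absorption of the symmetry point into the translation parameter.
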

\begin{proof}
By \eqref{lem.minima.2} and \eqref{lem.minima.4} of
Lemma~\ref{lem.minima}, there
exists $ y\in\R $ and a complex number $ |z| = 1 $ such that
$ u(x) = zR(x + y) $, where $ R\in\GS\cap H_{r,+} ^1 (\R) $.
Therefore, there are as many different $ \GU $ as
$ \#\GS\cap H_{r,+} ^1 (\R) $.
\end{proof}
\section{Stability of $ \GS $ and $ \GU $}
\label{sec.stability}
According to \cite[Theorem~3.5.1,\,p.\,77]{Caz03} the
The equation \eqref{eq.NLS} is locally well posed in
$ H\sp 1 (\R;\C) $. That is, given $ u $
in $ \X $, there exists a map
\begin{equation}
\label{eq.32}
U\in C\sp 1 \big([0,+\infty);L\sp 2 (\R;\C)\big)\cap
C\big([0,+\infty);H\sp 1 (\R;\C)\big)
\end{equation}
such that $ U_0 = u $ and $ \phi(t,\cdot) := U_t (u) $
is a solution to \eqref{eq.NLS}. We briefly check that $ G' $ satisfies the
the condition of \cite[Example~3.2.4,\,p.\,59]{Caz03}: since
$ G'' $ is continuous,
\[
|G'(s_1) - G'(s_2)|\leq L(K)|s_1 - s_2|,\quad L(K) := \sup_{[0,K]} |G''|
\]
for every $ s_1,s_2\in [0,K] $. And the function $ L $ is continuous
because $ G'' $ is continuous. The global well-posedness
follows from the apriori estimates that one can derive from
\eqref{prop1:2} of Proposition~\ref{prop1}.
\begin{proof}[Proof of Theorem~\ref{thm.ground-state}]
The proof of the stability is made with a contradiction argument:
let $ (u_n) $, $ \var_0 > 0 $ and $ (t_n) $ be such that
\begin{equation}
\label{eq.contradiction}
\dist(u_n,\GS)\to 0,\quad\dist(U_{t_n} (u_n),\GS)\geq\var_0.
\end{equation}
Since $ E $ and $ M $ are continuous functions, and constant
on the orbits $ U_t (u_n) $,
\[
E(U_{t_n}(u_n))\to I(\lambda),\quad M(u_n)\to\lambda.
\]
We set $ v_n := U_{t_n} (u_n) $.
From Lemma~\ref{lem.concentration-compactness}, there exists a
subsequence $ u_{n_k} $, a sequence $ (y_k) $ and
$ u\in S(\lambda) $ such that
\[
\no{u_{n_k} (\cdot + y_k) - u}_{H\sp 1 (\R;\C)}\to 0
\]
implying
\[
\no{u_{n_k} - u(\cdot - y_k)}_{H\sp 1 (\R;\C)}\to 0
\]
and giving a contradiction with \eqref{eq.contradiction}.
\end{proof}
\begin{proof}[Proof~of~Theorem~\ref{thm.sw}]
\textsl{Stability of $ \GU $}. By Proposition~\ref{prop.finite},
\[
\GS = \bigcup_{i = 1}\sp n \mathcal{G}_\lambda (R_i).
\]
We prove that
\[
\dist(\GS(R_h),\GS(R_k)) > 0\text{ for } h\neq k.
\]
In fact, the distance between two arbitrary points in the two
sets is
\[
\begin{split}
\dist(z_1 R_h (\cdot + y_1),z_2 R_k(\cdot + y_2)) &=
\dist(R_h, zR_k (\cdot + y))\\
&\geq\dist(R_h,R_k) > 0,
\end{split}
\]
where $ z =  \overline{z}_1 z_2 $ and $ y := y_2 - y_1 $.
The first inequality follows the fact that both $ R_i $ are symmetrically
decreasing with respect to the origin, from \eqref{lem.minima.2} of
Lemma~\ref{lem.minima}. Then
\begin{equation}
\label{eq.distance}
d := \inf_{h\neq k} \dist(\GS(R_h),\GS(R_k)) > 0.
\end{equation}
Now we prove that $ \mathcal{G}_{\lambda} (R_i) $ is stable.
Let $ \delta > 0 $ be such that
\begin{equation}
\label{eq.isolated}
B(R_i,\delta)\cap\GS\cap H^1 _r = \{R_i\},\quad \delta < \frac{d}{3}.
\end{equation}
We define
\[
E_\delta\sp i := \inf_{B(\GS(R_i),\delta)} E
\]
where the metric restricted on $ S(\lambda) $.
We claim that
\begin{equation}
\label{eq.11}
E_\delta\sp i > I(\lambda).
\end{equation}
Otherwise, we would have a sequence $ (u_n) $ such that
\[
E(u_n)\to I(\lambda),\quad M(u_n) = \lambda.
\]
By Lemma~\ref{lem.concentration-compactness}, there exists
a subsequence $ (u_{n_k}) $, $ u $ in $ S(\lambda) $ and $ (y_k) $ such that
\begin{equation}
\label{eq.17}
u_{n_k} (\cdot + y_k)\ra u\in\GS.
\end{equation}
From \eqref{eq.isolated} and the choice of $ \delta $, it follows
that $ u $ is in $ \GS(R_i) $. However, since
\[
\dist(u_{n_k} (\cdot + y_k),\GS(R_i)) = \delta
\]
there also hold $ \dist(u,\GS(R_i)) = \delta $, giving a
contradiction with \eqref{eq.17}. We are now able to
prove that $ \GS(R_i) $ is stable; again,
we use a contradiction argument. Let $ (u_n) $,
$ (t_n) $ and $ \var_0 > 0 $ be such that
\[
\dist(u_n,\GS(R_i))\to 0,\quad
\dist(U_{t_n} (u_n),\GS(R_i))\geq\var_0.
\]
We set $ v_n := U_{t_n} (u_n) $. Since $ \GS $ is stable, there exists $ k $
such that
\[
\dist(v_n,\GS(R_k))\to 0,\quad
\GS(R_i)\neq\GS(R_k).
\]
Let $ n_0 $ be such that
\begin{equation}
\label{eq.10}
\max\{\dist(u_n,\GS(R_i)),\dist(v_n,\GS(R_k)\}
< \delta
\end{equation}
and
\begin{equation}
\label{eq.12}
E(u_n) < \min\{E_\delta\sp i\mid 1\leq i\}
\end{equation}
for every $ n\geq n_0 $.
Along the curve
\[
\alpha\colon [0,t_{n_0}]\ra H\sp 1 (\R;\C),\quad
\alpha(t) = U_t (u_{n_0})
\]
the quantities $ E $ and $ M $ are constant, while the function
\[
\beta\colon\R\ra\R,\quad \beta(t) := \dist(U_t (u_{n_0}),\GS(R_i))
\]
is continuous, from \eqref{eq.32}. From \eqref{eq.10} and \eqref{eq.isolated},
we have
\[
\beta(0) < \delta,\quad \beta(t_{n_0})\geq 2\delta.
\]
Therefore, there exists $ t_* $ such that
\[
\beta(t_*) = \delta.
\]
Then,
\[
E(\alpha(t_*))\geq E_\delta\sp i.
\]
However from the conservation of $ E $ and \eqref{eq.12}
\[
E(\alpha(t_*)) = E(\alpha(0)) < E_\delta\sp i.
\]
And from \eqref{eq.11}, we obtain a contradiction.
\end{proof}
\begin{TAKEOUT}
Now, we wish to prove that solitary-waves are stable, in the following
sense: given $ v $ in $ \R $, the set
\[
G_\lambda (u,v) :=
\big\{ze\sp{ixv} u(\cdot + y)\mid (z,y)\in S\sp 1\times\R\big\}.
\]
is stable. The results relies on \eqref{eq.33}, the fact that
$ G_\lambda (u,0) = G_\lambda (u) $ is stable, and the next lemma.
We define $ S(x) := e\sp{ixv} $. From the local uniqueness of solutions
to \eqref{eq.NLS}, if $ \phi $ is a solution to \eqref{eq.NLS}, then
\[
\psi(t,x) := e\sp{i(v\cdot x - t|v|\sp 2)}\phi(t,x - 2tv)
\]
is a solution to \eqref{eq.NLS}.
Therefore,
\begin{equation}
\label{eq.33}
U_t (S\Psi)(x) = S(x) e\sp{-it|v|\sp 2} U_t (\Psi)(x - 2tv).
\end{equation}
It is convenient to define the following linear operators
\[
L_v \colon\X\ra\X,\quad L(\Phi) = e\sp{ixv}\Phi,\quad
B_t (\Phi) := e\sp{-it|v|\sp 2} \Phi(x - 2tv).
\]
\begin{lemma}
\label{lem:semigroups}
For every $ v,w,t $ in $ \R $
\begin{enumerate}[(i)]
\item
\label{lem:semigroups-1}
$ L_{v + w} = L_v \circ L_w $
\item
\label{lem:semigroups-2}
$ L_v $ is bounded and
$ \no{L_v}_{\mathcal{B}(\X)}\leq\sqrt{2(1 + |v|\sp 2)} $
\item
\label{lem:semigroups-3}
$ L_v (G_\lambda (u,w)) = G_\lambda (u,v + w) $
\item
\label{lem:semigroups-4}
$ B_t $ is an isometry of $ \X $
\item
\label{lem:semigroups-5}
$ B_t (G_\lambda (u,v)) = G_\lambda (u,v) $.
\end{enumerate}
\end{lemma}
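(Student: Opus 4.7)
The plan is to verify each of the five assertions by direct computation from the defining formulas for $L_v$, $B_t$, and $G_\lambda(u,v)$. None of the deeper results of the paper is needed; this lemma is purely a bookkeeping statement about how the phase--modulation and Galilean boost actions interact with the solitary-wave orbit set.

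Item (i) is immediate from additivity of the exponential. For (ii), the pointwise identity $|e^{ixv}|=1$ preserves the $L^2$ norm, and differentiating gives $(L_v\Phi)'=ive^{ixv}\Phi+e^{ixv}\Phi'$; after applying $|a+b|^2\leq 2|a|^2+2|b|^2$ pointwise and integrating, the constants collect into the bound $2(1+|v|^2)$ on the squared operator norm on $\X$. Item (iii) follows by applying $L_v$ to a generic element $ze^{ixw}u(\cdot+y)$ of $G_\lambda(u,w)$: the exponentials multiply into $e^{ix(v+w)}$, while $z$ and $y$ remain admissible parameters for an element of $G_\lambda(u,v+w)$. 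The reverse inclusion is the same computation applied to $L_{-v}$, which is the inverse of $L_v$ by (i).

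For (iv), I would observe that $B_t$ is the composition of translation by $-2tv$ with multiplication by the unimodular constant $e^{-it|v|^2}$, both of which are isometries on $\X$. For (v), I would compute
\[
B_t(ze^{ixv}u(x+y)) = z\,e^{-it|v|^2}e^{i(x-2tv)v}u(x-2tv+y) = z'\,e^{ixv}u(x+y'),
\]
with $z'=ze^{-3it|v|^2}$ still on the unit circle and $y'=y-2tv\in\R$, so the right-hand side lies in $G_\lambda(u,v)$; invertibility of $B_t$, with inverse $B_{-t}$ of the same form, upgrades the resulting inclusion to equality.

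The only sliver of real work is the phase accounting in (v): the factor $e^{-it|v|^2}$ from the definition of $B_t$ combines with the factor $e^{-2it|v|^2}$ coming from evaluating $e^{i(x-2tv)v}$ into a single unimodular scalar, which is precisely what allows the image to land back in $G_\lambda(u,v)$ with the same boost parameter $v$. Everything else is a one-line verification from the definitions.
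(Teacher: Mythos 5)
Your proof is correct and takes essentially the same route as the paper's: the paper writes out only item (ii), with exactly your computation ($|e^{ixv}|=1$ for the $L^2$ part, the product rule together with $|a+b|^2\le 2(|a|^2+|b|^2)$ for the derivative part), and explicitly declares (i), (iii), (iv), (v) to be routine verifications left to the reader. Your phase bookkeeping in (v), producing $z'=ze^{-3it|v|^2}\in S^1$ and $y'=y-2tv$, is precisely the verification the paper omits, so nothing is missing.
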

\begin{proof}
We check only \eqref{lem:semigroups-2}, where some computation is
required, while all the other statements can be safely checked by
the reader:
\begin{align*}
\no{L_v (\Phi)}_{L\sp 2}\sp 2 &=
\text{Re}\nint e\sp{ixv} \Phi(x) e\sp{-ixv} \overline{\Phi}(x)dx =
\no{\Phi}_{L\sp 2}\sp 2\leq 2\no{\Phi}_{H\sp 1}\sp 2\\
\no{L_v (\Phi)'}_{L\sp 2}\sp 2 &=
\no{e\sp{ixv} (\Phi + iv\Phi')}_{L\sp 2}\sp 2\leq
2(\no{\Phi}_{L\sp 2}\sp 2 + |v|\sp 2 \no{\Phi'}_{L\sp 2}\sp 2)\leq 2|v|\sp 2
\no{\Phi}_{H\sp 1}\sp 2.
\end{align*}
Taking the sum, we obtain the desired estimate.
\end{proof}
It is useful to write the relation in \eqref{eq.33} using the operators
$ B_t $ and $ U $:
\begin{equation}
\label{eq.35}
L_v\sp {-1}\circ U_t = B_t\circ U_t\circ L_v\sp{-1}.
\end{equation}
\begin{proof}[Proof~of~Corollary~\ref{cor:1}]
In order to simplify the notation we set
\[
G := G_\lambda (u),\quad G_* := G_\lambda (u,v),\quad L := L_v.
\]
Then
\[
\begin{split}
\dist(U_t (\Psi),G_*) &=
\dist(U_t (\Psi),L(G))\leq\sqrt{2(1 + |v|\sp 2)}
\dist(L\sp{-1} U_t (\Psi),G) \\
&=
\sqrt{2(1 + |v|\sp 2)}\dist(B_t (U_t (L\sp{-1} (\Psi))),G) \\
&=
\sqrt{2(1 + |v|\sp 2)}\dist(U_t (L\sp{-1} (\Psi),G)).
\end{split}
\]
Since $ G $ is stable, given $ \var > 0 $, there exists $ \delta > 0 $
such that
\[
\dist(\Phi,G) < \delta\implies\dist(U_t(\Phi),G) <
\frac{\var}{\sqrt{2(1 + |v|\sp 2)}}.
\]
We choose
\[
\delta_v := \frac{\delta}{\sqrt{2(1 + |v|\sp 2)}}
\]
If $ \dist(\Psi,G_*) < \delta_v $, then
$ \dist(L\sp{-1}(\Psi),G) < \delta $. Then
\[
\dist(L\sp{-1}(\Psi),G) < \frac{\var}{\sqrt{2(1 + |v|\sp 2)}}.
\]
From the chain of inequalities in \eqref{eq.}, we obtain
\[
\dist(U_t(\Psi),G_*) < \var.
\]
\end{proof}
\begin{proof}[Proof of Corollary~\ref{cor:2}]
Let $ M $ be a stable and invariant subset of $ G_\lambda $,
that is $ U_t (M)\subseteq M $ for every $ t\geq 0 $.
We show that $ M $ is closed under multiplication by complex numbers in
$ S\sp 1 $ and space translations. That is, given $ \Phi $ in $ M $,
there holds
\[
z\Phi(\cdot + y)\in M
\]
for every $ (z,y)\in S\sp 1\times\R $. We set
$ u(x) := |\Phi(x)|\in G_\lambda $. From Lemma~\ref{lem:minima},
there exists $ z $ in $ S\sp 1 $ such that $ \Phi(x) = zu(x) $.
There also exists $ \om > 0 $ such that
\[
U_t (\Phi)(x) = ze\sp{i\om t} u(x) = e\sp{i\om t}\Phi(x).
\]
Since $ M $ is invariant, for every $ t\geq 0 $, the function above
belongs to $ M $. Since $ \om > 0 $, therefore non-zero,
the image of $ e\sp{i\om t} $
is $ S\sp 1 $.
Now, we prove that $ M $ is invariant under space translations.
Let $ y $ be a point of $ \R $.
We will show that $ u(\cdot + y) $ is in $ M $.
We consider the example of T.~Cazenave
and P.~L.~Lions, \cite{CL82} and define
\[
\Phi_n (x) := e\sp{\frac{ix}{n}} u(x).
\]
There holds
\[
\dist(\Phi_n,M)\ra 0.
\]
From \eqref{eq.33}, we have
\[
U_t (\Phi_n)(x) = e\sp{i\left(\frac{nx - t}{n\sp 2}\right)} u(x - 2t/n).
\]
We set $ t_n := -yn/2 $ and obtain
\[
U_{t_n}(\Phi_n)(x) = e\sp{-iy/2n} e\sp{ix/n} u(x + y).
\]
Since $ M $ is invariant for the action of $ S\sp 1 $,
\[
\dist(U_{t_n}(\Phi_n),M) = \dist(e\sp{ix/n} u(\cdot + y),M)
\]
Since
\[
\dist(e\sp{ix/n} u(\cdot + y),u(\cdot + y))\ra 0
\]
we have
\[
\dist(e\sp{ix/n} u(\cdot + y),M) =
\dist(u(\cdot + y),M) + o(1).
\]
Since $ M $ is stable and closed, $ u(\cdot + y)\in M $.
Therefore, whenever $ u_i\in M $, we have $ G_\lambda (u_i)\subseteq M $.
Then
\[
M = \bigcup_{u\in M} G_\lambda (u).
\]
In particular, among the closed, invariant and stable subsets of
$ G_\lambda $, $ G_\lambda (u) $ is minimal.
\end{proof}
\end{TAKEOUT}
\section{Uniqueness}
\label{sec.uniqueness}
We assume that (G1-5) hold. We fix $ \lambda > 0 $.
\def\proofname{Proof~of~Theorem~\ref{thm.uniqueness}}
\begin{proof}
Let $ R_0 $ and $ R_1 $ be two positive functions in $ \GS\cap H^1 _r $.
The set $ \mathscr{A} $ introduced in \eqref{G5} is the set of $ \omega $
such that a solution to \eqref{eq.E} exists. If $ \mathscr{A} $ is
connected, then
the function $ R_* $ defined on $ (\omega_0 - \var_0,\omega_0 + \var_0) $,
in \eqref{eq.ND26}, can be extended to $ \mathscr{A} $, so the function
$ \lambda $.
Let $ \omega_1 $ be the Lagrange multiplier associated to $ R_1 $.
Then $ \omega_1\in A $. Since $ R_0 $ and $ R_1 $ belong to the same
constraint,
\[
\lambda(\omega_0) = \lambda(\omega_1).
\]
By Lemma~\ref{lem.non-degeneracy}, $ \lambda'\geq 0 $ on $ [\omega_0,\omega_1] $.
Then $ \lambda'(\omega) = 0 $ on the whole interval. Then
$ \lambda'(\omega_0) = 0 $ giving a contradiction with
Lemma~\ref{lem.non-degeneracy}. Hence $ \omega_0 = \omega_1 $ and $ R_0 $
and $ R_1 $ solve the same initial value problem \eqref{eq.IVP}. Then
$ R_0 = R_1 =: R_+ $. The other solution is $ R_- := -R_+ $.
\end{proof}
\def\proofname{Proof}
\begin{corollary}
\label{cor.uniqueness}
If (G1-5) hold, then $ \GS = \GU $ for every $ u\in\GS $.
\end{corollary}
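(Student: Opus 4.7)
The plan is to combine Theorem~\ref{thm.uniqueness} with Lemma~\ref{lem.minima} to show that every element of $\GS$ lies on the orbit $\mathcal{G}_\lambda(R_+)$, and then observe that this orbit is stable under the group action, so it coincides with $\mathcal{G}_\lambda(u)$ for any representative.

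First I would pick an arbitrary $u\in\GS$. By \eqref{lem.minima.4} of Lemma~\ref{lem.minima}, there is $z\in S^1$ with $u(x) = z|u(x)|$ for every $x$, and by \eqref{lem.minima.2} the function $|u|$ is symmetrically decreasing with respect to some point $y_0\in\R$. Define $R(x) := |u(x+y_0)|$. Then $R\in H^1_r(\R)$ and is positive by \eqref{lem.minima.3}. Moreover $M(R) = M(u) = \lambda$, and the translation invariance of $E$ together with the equality $E(u) = E(|u|)$ established in the proof of Lemma~\ref{lem.minima} (via the Convex Inequality for the Gradient) gives $E(R) = E(u) = I(\lambda)$. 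Hence $R\in\GS\cap H^1_r$ and $R$ is positive.

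By Theorem~\ref{thm.uniqueness}, which is applicable because (G1-5) hold, the positive element of $\GS\cap H^1_r$ is unique and equals $R_+$; therefore $R = R_+$, and $u(x) = z\,R_+(x-y_0)$. This proves the inclusion $\GS\subseteq\mathcal{G}_\lambda(R_+)$. The reverse inclusion $\mathcal{G}_\lambda(R_+)\subseteq\GS$ is immediate from the invariance of both $E$ and $M$ under multiplication by unit complex numbers and translations. Hence $\GS = \mathcal{G}_\lambda(R_+)$.

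Finally, given any $u\in\GS$, write $u = z\,R_+(\cdot - y_0)$ as above. For every $(w,y)\in S^1\times\R$ one has $wu(\cdot + y) = (wz)\,R_+(\cdot + y - y_0)$, so $\mathcal{G}_\lambda(u) = \mathcal{G}_\lambda(R_+)$; the $S^1\times\R$ action is a group action and the orbits coincide with those of any representative. Combining with the previous paragraph, $\mathcal{G}_\lambda(u) = \mathcal{G}_\lambda(R_+) = \GS$. The only nontrivial ingredient is the reduction of a general minimizer to a positive symmetric one, which is exactly what Lemma~\ref{lem.minima} supplies; once Theorem~\ref{thm.uniqueness} pins down the positive symmetric minimizer as unique, the corollary is essentially a bookkeeping step.
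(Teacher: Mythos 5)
Your proof is correct and follows essentially the same route as the paper: both reduce an arbitrary minimizer to a positive, even representative via parts \eqref{lem.minima.2}--\eqref{lem.minima.4} of Lemma~\ref{lem.minima} and then invoke the uniqueness of Theorem~\ref{thm.uniqueness} to identify all such representatives, so that any two elements of $\GS$ differ only by a phase and a translation. The only cosmetic difference is that you normalize every minimizer against the single function $R_+$, while the paper compares two arbitrary minimizers directly; the content is the same.
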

\begin{proof}
We prove that an arbitrary $ v\in\GS $ belongs to $ \GS $. In fact,
by \eqref{lem.minima.4} of Lemma~\ref{lem.minima},
there are two complex numbers $ z,w\in\C $ such that
$ |z| = |w| = 1 $ and
\[
v(x) = zR_1 (x),\quad u(x) = w R_2 (x)
\]
where $ R_1,R_2\in\GS\cap H^1 $ and symmetric with respect to two points
$ y_1 $ and $ y_2 $, respectively, by \eqref{lem.minima.2} of
Lemma~\ref{lem.minima}. Then
$ R_1 (\cdot - y_1) $ and $ R_2 (\cdot - y_2) $ are two positive solutions
in $ \GS\cap H^1 _r $. By Theorem~\ref{thm.uniqueness},
\[
R_1 (\cdot - y_1) = R_2 (\cdot - y_2).
\]
Then
\[
\begin{split}
v(x) &= zR_1 (x) = zR_2 (x - y_2 + y_1) = w^{-1} z w R_2 (x - y_2 + y_1) \\
&= w^{-1} z u(x - y)
\end{split}
\]
where $ y := y_1 - y_2 $. Then $ v\in\GU $.
\end{proof}
\section{The combined power-type case}
\label{sec.cp}
An example of non-linearity $ G $ satisfying all the assumptions
(G1-G5) is
\[
G(s) := - a |s|^p + b |s|^q,\quad 2 < p < 6,\quad 2 < q
\]
with $ c_1,c_2 > 0 $. Regularity and power-type estimate assumptions contained
in \eqref{G1}, \eqref{G2} and \eqref{G4}.

\eqref{G3} is satisfied.
Let $ s_0 > 0 $ be the zero of the function $ Q_\omega $ such that
$ Q_\omega (s_0) = 0 $ and $ Q_\omega '(s_0) < 0 $.
First, we prove that $ L(s_0)\geq 0 $. In fact,
the two conditions on $ s_0 $ give
\[
\omega - c_1 s_0^{p - 2} + c_2 s_0 ^{q - 2} = 0,\quad
2\omega - p c_1 s_0^{p - 2} + q c_2 s_0 ^{q - 2} < 0.
\]
A substitution yields
\[
c_1 (p - 2) s_0 ^{p - 2} - c_2 (q - 2) s_0 ^{q - 2} > 0.
\]
Then
\[
\begin{split}
L(s_0) &= c_1 (p - 2) (6 - p) s_0 ^p - c_2 (q - 2)(6 - q) s_0 ^q \\
&> c_2 (q - 2)(6 - p) s_0 ^q - c_2 (q - 2)(6 - q) s_0 ^q \\
&= c_2 (q - 2) (q - p) s_0 ^{q - 2} > 0.
\end{split}
\]
We can show that $ L $ is non-negative on the interval
$ (0,R_* (\omega)] $. Let $ s < R_* (\omega) $ be such that
$ L(s) < 0 $. Then
$ L(R_* (\omega)) < 0 $, because $ L $ has only one zero on $ (0,+\infty) $.
By definition of $ R_* (\omega) $, we have $ Q_\omega (R_*(\omega)) = 0 $ and
$ Q_\omega ' (R_* (\omega)) < 0 $,
which implies $ L(R_* (\omega)) > 0 $ and gives a contradiction.

\eqref{G5} is satisfied. Let $ s_1 $ be the unique local maximum of $ G $.
Then $ A = H((0,s_1)) $, thus connected. Therefore, from
Theorem~\ref{thm.uniqueness}, when $ G $ is a combined power pure-power
non-linearity, there exists only one positive function
$ R\in\GS\cap H^1 _r $.
\section*{Appendix}
We show that a function satisfied a combined power-type estimate
can be written as sum of two functions satisfying a power-type
estimate. As a consequence, we can suppose that $ G $ satisfies
\begin{equation}
\label{eq.g2}
|G'(s)|\leq c|s|\sp{p - 1},\quad G(0) = 0
\end{equation}
in place of \eqref{G2}, and
\begin{equation}
\label{eq.g4}
|G''(s)|\leq c|s|\sp{p - 2}
\end{equation}
in place of \eqref{G4}.
\begin{proposition}
\label{prop.split}
Let $ G $ be a function satisfying \eqref{G2}.
Then, there are two functions $ C^1 $ functions $ G_1 $ and $ G_2 $
and $ c\in\R $ such that
\[
|G_1 '(s)|\leq c |s|\sp{p - 1},\quad |G_2 '(s)|\leq c|s|\sp{q - 1},\quad
G = G_1 + G_2.
\]
If $ G $ satisfies \eqref{G4} as well, then $ G_1 $, $ G_2 $ and $ c $
can be chosen in such a way that the inequalities
\[
|G_1 '' (s)|\leq c |s|\sp{p - 2},\quad |G_2 ''(s)|\leq c|s|\sp{q - 2}.
\]
are also satisfied.
\end{proposition}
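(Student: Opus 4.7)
The plan is to separate the two scales in \eqref{G2} by means of a smooth cutoff. Fix $ \chi\in C^\infty (\R) $ with $ 0\leq\chi\leq 1 $, $ \chi(s) = 1 $ for $ |s|\leq 1 $, and $ \chi(s) = 0 $ for $ |s|\geq 2 $. Define
\[
G_1 (s) := \chi(s)\, G(s),\qquad G_2 (s) := (1 - \chi(s))\, G(s).
\]
These are $ C^1 $ (and $ C^2 $ when \eqref{G4} is assumed), and their sum is $ G $. Before splitting I record the elementary bound $ |G(s)|\leq C(|s|^p/p + |s|^q/q) $, which follows from $ G(0) = 0 $ by integrating \eqref{G2} from the origin; this will be the only place the normalisation $ G(0) = 0 $ enters.

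The main verification is the estimate for $ G_1 ' $, which I break into the three regions cut out by $ \chi $. In the inner region $ |s|\leq 1 $ the cutoff is identically one, so $ G_1 ' = G' $; since $ p\leq q $ one has $ |s|^{q - 1}\leq |s|^{p - 1} $ there, and \eqref{G2} yields $ |G_1 ' (s)|\leq 2C|s|^{p - 1} $. In the outer region $ |s|\geq 2 $, $ G_1 \equiv 0 $, so nothing has to be proved. In the transitional annulus $ 1\leq |s|\leq 2 $ both $ \chi ' $ and (by the preliminary bound above) $ G $ are uniformly bounded, so $ |G_1 ' (s)| = |\chi ' G + \chi G'| $ is controlled by a constant; since $ |s|^{p - 1}\geq 1 $ on this annulus, that constant is absorbed into a multiple of $ |s|^{p - 1} $. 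The estimate for $ G_2 ' $ is symmetric: it vanishes in the inner region, reduces to \eqref{G2} with $ |s|^{p - 1}\leq |s|^{q - 1} $ in the outer region, and is handled in the transition band by the same boundedness/absorption argument, this time using $ |s|^{q - 1}\geq 1 $.

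The second-derivative statement under \eqref{G4} follows by exactly the same three-region analysis applied to the Leibniz expansions $ G_1 '' = \chi '' G + 2\chi ' G ' + \chi G '' $ and $ G_2 '' = -\chi '' G - 2\chi ' G ' + (1 - \chi) G '' $. On $ |s|\leq 1 $ and $ |s|\geq 2 $ only the $ \chi G '' $ or $ (1 - \chi) G '' $ term survives and \eqref{G4} combined with the comparison of $ |s|^{p - 2} $ and $ |s|^{q - 2} $ suffices; on the annulus $ 1\leq |s|\leq 2 $, $ G $, $ G ' $, $ G '' $ are all bounded and $ |s|^{p - 2},|s|^{q - 2} $ are bounded below, so the usual absorption closes the estimate.

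I do not anticipate any real obstacle: once the cutoff is fixed, every step is either a pointwise comparison $ |s|^{p - 1}\lessgtr |s|^{q - 1} $ determined by whether $ |s|\leq 1 $ or $ |s|\geq 1 $, or the remark that on a compact annulus any bounded quantity can be absorbed into $ |s|^{p - 1} $ or $ |s|^{q - 1} $. The only item requiring a moment of care is the boundedness of $ G $ and $ G ' $ on the transition annulus, which is supplied by \eqref{G2} and \eqref{G4} together with $ G(0) = 0 $.
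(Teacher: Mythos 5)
Your proof is correct and follows essentially the same strategy as the paper's: a smooth cutoff equal to $1$ for $|s|\leq 1$ and supported in $|s|\leq 2$ separates the two power scales, and the pointwise comparison of $|s|^{p-1}$ with $|s|^{q-1}$ on either side of $|s|=1$ does the rest. The only (harmless) difference is that the paper applies the cutoff directly to the derivatives, setting $G_1' := \sigma G'$ (and $G_1'' := \sigma G''$), which avoids the Leibniz cross-terms altogether, whereas you multiply $G$ itself by the cutoff and then absorb the cross-terms on the compact annulus $1\leq |s|\leq 2$ via the bound $|G(s)|\leq C(|s|^p/p + |s|^q/q)$ coming from $G(0)=0$; both routes are valid.
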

\begin{proof}
In both cases, the function can be obtained as follows: we consider a
non-negative continuous function $ \sigma $ such that
\[
\sigma =
\begin{cases}
1 & \text{on } [-1,1]\\
0 & \text{on } (-\infty,-2]\cup [2,+\infty)
\end{cases}
\]
and $ 0\leq\sigma\leq 1 $, $ |\sigma'|\leq 2 $. Then we choose
$ G_1 $ and $ G_2 $ as the unique functions such that
$ G_1 ' = \sigma G' $, $ G_2 ' = (1 - \sigma) G' $ and
$ c = 2\sp{q - p + 1} C $. If \eqref{G4} holds,
$ G_1 '' = \sigma G'' $ and $ G_2 '' = (1 - \sigma) G'' $, while $ c $
does not change.
\end{proof}
The next proposition is about the regularity of $ E $.
The gradient part of $ E $ is smooth; the regularity
of the non-linear part it is obtained with the same techniques
used by A.~Ambrosetti and G.~Prodi in \cite[Theorem~2.2]{AP93}.
We include the details of this proof in view of slight differences
with the quoted reference, where $ \R $ is replaced
by a bounded domain $ \Omega $, and the class of regularity
$ C^2(\X,\R) $ is replaced by $ C(L^p (\Omega),L^q (\Omega)) $.

The regularity of $ E $ depends on the regularity of $ F $ and the
power-type estimates which, in turn, are related to the estimates
of $ G $ \eqref{eq.g2} and \eqref{eq.g4}.
We identify $ \C $ with $ \R\sp 2 $.
If $ G $ is derivable, then
\begin{gather*}
F(s) = G(|s|)\\
F'(s) = G'(|s|)\frac{s}{|s|},\quad F'(0) = 0
\end{gather*}
for every $ s\in\C - \{0\} $. If $ G $ is two-times derivable,
\begin{gather*}
\pt_{ij} ^2 F(s) = G''(|s|)\frac{s_i s_j}{|s|^2} + G'(|s|)
\frac{\delta_{ij} |s|^2 - s_i s_j}{|s|^3},\quad
\pt^2 _{ij} F(0) = 0
\end{gather*}
Moreover,
\[
\begin{split}
(\pt_{ij} ^2 F)(s)^2 &= G''(s)^2 \frac{(s_i s_j)^2}{|s|^4}
+ G'(s)^2 \left(\frac{\delta_{ij} |s|^2 - s_i s_j}{|s|^3}\right)^2\\
&+ 2G''(s) G'(s) \frac{s_i s_j(\delta_{ij} |s|^2 - s_j s_j)}{|s|^5}.
\end{split}
\]
By inspection,
\begin{gather*}
\sum_{i,j} \frac{(s_i s_j)^2}{|s|^4} = 1,\quad
\sum_{i,j} \left(\frac{\delta_{ij} |s|^2 - s_i s_j}{|s|^3}\right)^2
= \frac{1}{|s|^2}\\
\sum_{i,j} \frac{s_i s_j(\delta_{ij} |s|^2 - s_j s_j)}{|s|^5}
 = 0.
\end{gather*}
Therefore
\begin{gather}
\label{eq.f2}
\no{F''(s)}_{\mathcal{L}(\R^2)} ^2\leq G''(s)^2 +
\frac{G'(s)^2}{|s|^2}\leq 2C^2 |s|\sp{2(p - 2)}\\
\label{eq.f4}
|F'(s)|\leq C|s|^{p - 1}
\end{gather}
In the next proposition, the inequality
\begin{equation}
\label{eq.2}
(a + b)\sp c\leq 2\sp{c - 1}(a^c + b^c),\quad a,b\geq 0
\end{equation}
will be applied several times with different positive
values of $ c $.
\begin{proposition}
\label{prop.regular}
Let $ G $ be such that \eqref{eq.g2} is satisfied. Then $ E $ is differentiable.
If \eqref{eq.g4} holds too, then $ E $ is two-times differentiable.
\end{proposition}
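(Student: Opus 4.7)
The plan is to decompose $E = E_1 + N$, where $E_1(u) := \frac{1}{2}\nint|u'(x)|^2 dx$ is continuous and bilinear, hence $C^\infty$, so that all the work concentrates on the Nemytskii-type piece $N(u) := \nint F(u(x))dx$. The key one-dimensional ingredient is the Sobolev embedding $\X\hookrightarrow L^r(\R)$ for every $r\in [2,+\infty]$; in particular $H^1$-convergence implies $L^\infty$-convergence, which makes pointwise a.e.\ convergence automatic and lets us build uniform dominants on bounded balls of $\X$.

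First I would exhibit the candidate Gateaux derivative
\[
dN(u)[v] := \text{Re}\nint F'(u(x))\cdot\overline{v(x)}dx,
\]
with $F'$ understood as in the text preceding \eqref{eq.f4}. By \eqref{eq.f4} and H\"older,
\[
|dN(u)[v]|\leq C\nint|u|^{p - 1}|v|dx\leq C\no{u}_{L^p}^{p - 1}\no{v}_{L^p},
\]
so $dN(u)$ is a continuous real-linear form on $\X$. To upgrade Gateaux to Fr\'echet I would apply the fundamental theorem of calculus to $t\mapsto F(u + tv)$ and write
\[
N(u + v) - N(u) - dN(u)[v] = \int_0^1\nint \big(F'(u + tv) - F'(u)\big)\cdot v\, dx\, dt.
\]
Estimating the inner integral by H\"older reduces the question to the continuity of the Nemytskii operator $u\mapsto F'(u)$ from $L^p(\R)$ into $L^{p'}(\R)$: by \eqref{eq.f4} and \eqref{eq.2} the integrand is dominated by $C(|u|^{p - 1} + |v|^{p - 1})|v|$, pointwise convergence follows from the continuity of $F'$ together with the embedding $\X\hookrightarrow L^\infty(\R)$, and dominated convergence closes the argument uniformly in $t\in [0,1]$. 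The same scheme proves continuity of $u\mapsto dN(u)$ as a map $\X\to\mathcal{L}(\X,\R)$, and hence $E\in C^1(\X,\R)$.

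Under the additional pointwise estimate \eqref{eq.g4}, which via \eqref{eq.f2} yields $\no{F''(s)}_{\mathcal{L}(\R^2)}\leq C|s|^{p - 2}$, the same strategy applies to the candidate bilinear form
\[
d^2 N(u)[h,v] := \text{Re}\nint F''(u(x))[h(x),v(x)]dx,
\]
which on bounded balls of $\X$ is controlled by $C\no{u}_{L^\infty}^{p - 2}\no{h}_{L^2}\no{v}_{L^2}$. A second-order Taylor expansion of $F$ with integral remainder, combined with one more application of dominated convergence of the same type, shows Fr\'echet differentiability of $dN$ and continuity of $d^2 N$, so that $E\in C^2(\X,\R)$.

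The main obstacle in this otherwise standard argument is organizing the dominating functions so that dominated convergence applies uniformly on bounded subsets of $\X$; the 1D Sobolev embedding into $L^\infty$ removes the subcritical range restrictions on $p$ that would be unavoidable in dimensions $n\geq 3$, so the single growth assumption \eqref{eq.g2}, respectively \eqref{eq.g4}, suffices.
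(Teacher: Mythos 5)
Your proposal is correct and follows essentially the same route as the paper: split off the smooth quadratic part, identify the candidate first and second differentials, write the remainder with an integral form of Taylor's theorem, and reduce everything via H\"older and the growth bounds \eqref{eq.f4}, \eqref{eq.f2} to the continuity of the Nemytskii maps $ u\mapsto F'(u) $ and $ u\mapsto F''(u) $, settled by dominated convergence. The only real difference is bookkeeping: the paper runs the dominated-convergence step as a contradiction argument, extracting a subsequence that converges a.e.\ and is dominated by a fixed $ H^1 $ function, whereas you appeal to ``uniform dominants on bounded balls'' via the $ L^\infty $ embedding --- a single $ L^1 $ dominant valid on a whole $ H^1 $-ball does not exist (consider translates of a bump), so you still need that subsequence extraction, or the generalized dominated convergence theorem using $ \no{h_n}_{L^p}\to 0 $, to close the limit.
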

\begin{proof}
(i). We will use the notation $ E $ for the non-linear part
$ \nint F(u)dx $.
We have to prove that
\[
E(u_0 + h) - E(u_0) - \nint F'(u_0)\cdot h = o(\no{h}_{H^1}).
\]
In fact, the term on the left is
\[
A := \nint \int_0\sp 1 (F'(u_0 + th) - F'(u_0))\cdot h\,dt dx.
\]
We set
\[
h^*(x) := \int_0\sp 1 |F'(u_0(x) + th(x)) - F'(u_0(x))|dt
\]
From \eqref{eq.f2} and \eqref{eq.2}
there exists $ C_1 = C_1(C,p) $ such that
\[
|F'(u_0 + th) - F'(u_0)|\leq
C_1(|u_0|\sp{p - 1} + t\sp{p - 1} |h|\sp{p - 1}).
\]
After the integration on the interval $ [0,1] $, we obtain
\[
h^* (x)\sp{p/(p - 1)}\leq C_2\left(|u_0(x)|\sp p +
|h(x)|\sp p\right).
\]
We prove that $ A = o(\no{h}) $ with a contradiction
argument. If it is false, there exists $ \var_0 > 0 $
and a sequence $ (h_n) $ converging to zero in $ H^1 $
such that
\begin{equation}
\label{eq.1}
|A|\geq\var_0 \no{h_n}.
\end{equation}
Up to extract a subsequence, we can suppose that $ |h_n| $
converges to zero pointwise and it is dominated by an $ H^1 $
function $ h_0 $. Then
\[
h^* _n (x)\sp{p/(p - 1)}\leq C_2 (|u_0 (x)|\sp p +
h_0 (x)\sp p).
\]
We have a dominated sequence in $ L^1 (\R) $ converging
pointwise a.e. to zero. Therefore,
\[
\nint |h^* _n (x)|\sp{p/(p - 1)} dx\to 0.
\]
From the H\"older inequality (with exponents $ p $ and
$ p/(p - 1) $ and functions $ h_n $ and $ h_n * $), and
\eqref{eq.SGN},
\[
|A|\leq C_2\no{h_n}_p \nint |h^* _n (x)|\sp{p/(p - 1)}\leq
C_3 \no{h_n}_{H^1} \nint |h^* _n (x)|\sp{p/(p - 1)}.
\]
Then $ |A|\leq \no{h_n} o(1) $, giving a contradiction
with \eqref{eq.1}. Therefore, $ E $ is differentiable and
\[
\langle dE(u_0),k\rangle = \nint F'(u_0 (x))\cdot k(x)dx.
\]
(ii). If \eqref{eq.g2} and \eqref{eq.g4} hold, then $ E $ is two-times
differentiable. We set
\[
B := dE(u_0 + h) - dE(u_0) - \nint F''(u_0(x)) h(x) dx
\]
and prove that $ B = o(\no{h}_{H^1}) $. In fact,
\[
B =
\nint\int_0 ^1 \big(F''(u_0 (x) + th(x)) - F''(u_0(x))\big)
h(x) dt.
\]
Let $ k $ be an arbitrary vector of $ H^1 $. Then
\[
|\langle B,k\rangle|\leq \nint |h(x)| |k(x)| h^{**} (x) dx
\]
where
\[
h^{**} (x) := \int_0 ^1 \no{F''(u_0 (x) + th(x)) - F''(u_0(x))}%
_{\mathcal{L}(\R^2)} dt.
\]
From \eqref{eq.f4} and \eqref{eq.2}, there
exists $ C_1 ' = C_1' (C,p) $ such that
\[
\no{F''(u_0 + th) - F''(u_0)}_{\mathcal{L}(\R^2)}
\leq C_1' (|u_0(x)|^{p - 2} +  t^{p - 2} |h(x)|\sp{p - 2}).
\]
After the integration on $ [0,1] $, from \eqref{eq.2} we obtain
\[
h^{**} (x)\sp{p/(p - 2)}\leq C_2 '
(|u_0(x)|^p + |h(x)|\sp p).
\]
Now, suppose that there exists $ \var_0 > 0 $ and
a dominated sequence $ (h_n) $ converging in $ H^1 $,
pointwise a.e. to zero, such that
\begin{equation}
\label{eq.3}
\no{B}_{\mathcal{L}(H^1,\R)}\geq\var_0\no{h_n}_{H^1}.
\end{equation}
By the H\"older inequality and \eqref{eq.SGN}
\[
|\langle B,k\rangle|\leq C_2'
\no{h_n}_p \no{k}_p \no{h_n ^{**}}_{p/(p - 2)}\leq
C_3' \no{h_n}\no{k}\no{h_n ^{**}}_{p/(p - 2)}
\]
for every $ k\in H^1 $. Then,
\[
\no{B}_{\mathcal{L}(H^1,\R)}\leq C_3'
\no{h_n}\no{h_n ^{**}}_{p/(p - 2)} = \no{h_n} o(1)
\]
which gives a contradiction with \eqref{eq.3}.
\end{proof}
\def\cprime{$'$} \def\cprime{$'$} \def\cprime{$'$} \def\cprime{$'$}
  \def\cprime{$'$} \def\cprime{$'$} \def\cprime{$'$} \def\cprime{$'$}
  \def\cprime{$'$} \def\polhk#1{\setbox0=\hbox{#1}{\ooalign{\hidewidth
  \lower1.5ex\hbox{`}\hidewidth\crcr\unhbox0}}} \def\cprime{$'$}
  \def\cprime{$'$} \def\cprime{$'$}
\providecommand{\bysame}{\leavevmode\hbox to3em{\hrulefill}\thinspace}
\providecommand{\MR}{\relax\ifhmode\unskip\space\fi MR }
\providecommand{\MRhref}[2]{%
  \href{http://www.ams.org/mathscinet-getitem?mr=#1}{#2}
}
\providecommand{\href}[2]{#2}

\nocite{Son10,TVZ07,BBGM07,BJM09}
\thispagestyle{empty}

\begin{thebibliography}{10}

\bibitem{AP93}
Antonio Ambrosetti and Giovanni Prodi, \emph{A primer of nonlinear analysis},
  Cambridge Studies in Advanced Mathematics, vol.~34, Cambridge University
  Press, Cambridge, 1993, Corrected reprint of the 1993 original.
  \MR{MR1336591(96a:58019)}

\bibitem{BBGM07}
J.~Bellazzini, V.~Benci, M.~Ghimenti, and A.~M. Micheletti, \emph{On the
  existence of the fundamental eigenvalue of an elliptic problem in {$\Bbb R\sp
  N$}}, Adv. Nonlinear Stud. \textbf{7} (2007), no.~3, 439--458. \MR{2340279
  (2008g:35048)}

\bibitem{BS11}
Jacopo Bellazzini and Gaetano Siciliano, \emph{Scaling properties of
  functionals and existence of constrained minimizers}, J. Funct. Anal.
  \textbf{261} (2011), no.~9, 2486--2507. \MR{2826402}

\bibitem{BF14}
Vieri Benci and Donato Fortunato, \emph{Hylomorphic solitons and charged
  {Q}-balls: {E}xistence and stability}, Chaos Solitons Fractals \textbf{58}
  (2014), 1--15. \MR{3139558}

\bibitem{BL83a}
H.~Berestycki and P.-L. Lions, \emph{Nonlinear scalar field equations. {I}.
  {E}xistence of a ground state}, Arch. Rational Mech. Anal. \textbf{82}
  (1983), no.~4, 313--345. \MR{MR695535 (84h:35054a)}

\bibitem{BL83}
Ha{\"{\i}}m Br{\'e}zis and Elliott Lieb, \emph{A relation between pointwise
  convergence of functions and convergence of functionals}, Proc. Amer. Math.
  Soc. \textbf{88} (1983), no.~3, 486--490. \MR{699419}

\bibitem{BJM09}
Jaeyoung Byeon, Louis Jeanjean, and Mihai Mari{\c{s}}, \emph{Symmetry and
  monotonicity of least energy solutions}, Calc. Var. Partial Differential
  Equations \textbf{36} (2009), no.~4, 481--492. \MR{2558325 (2011f:35105)}

\bibitem{CL82}
T.~Cazenave and P.-L. Lions, \emph{Orbital stability of standing waves for some
  nonlinear {S}chr\"odinger equations}, Comm. Math. Phys. \textbf{85} (1982),
  no.~4, 549--561. \MR{MR677997 (84i:81015)}

\bibitem{Caz03}
Thierry Cazenave, \emph{Semilinear {S}chr\"odinger equations}, Courant Lecture
  Notes in Mathematics, vol.~10, New York University Courant Institute of
  Mathematical Sciences, New York, 2003. \MR{2002047 (2004j:35266)}

\bibitem{Gar12}
Daniele Garrisi, \emph{On the orbital stability of standing-waves solutions to
  a coupled non-linear {K}lein-{G}ordon equation}, Adv. Nonlinear Stud.
  \textbf{12} (2012), no.~3, 639--658.

\bibitem{GV79}
J.~Ginibre and G.~Velo, \emph{On a class of nonlinear {S}chr\"odinger
  equations. {I}. {T}he {C}auchy problem, general case}, J. Funct. Anal.
  \textbf{32} (1979), no.~1, 1--32. \MR{533218 (82c:35057)}

\bibitem{GSS87}
Manoussos Grillakis, Jalal Shatah, and Walter Strauss, \emph{Stability theory
  of solitary waves in the presence of symmetry. {I}}, J. Funct. Anal.
  \textbf{74} (1987), no.~1, 160--197. \MR{901236 (88g:35169)}

\bibitem{LL01}
Elliott~H. Lieb and Michael Loss, \emph{Analysis}, second ed., Graduate Studies
  in Mathematics, vol.~14, American Mathematical Society, Providence, RI, 2001.
  \MR{MR1817225 (2001i:00001)}

\bibitem{Lio84a}
P.-L. Lions, \emph{The concentration-compactness principle in the calculus of
  variations. {T}he locally compact case. {I}}, Ann. Inst. H. Poincar\'e Anal.
  Non Lin\'eaire \textbf{1} (1984), no.~2, 109--145. \MR{778970 (87e:49035a)}

\bibitem{SS85}
Jalal Shatah and Walter Strauss, \emph{Instability of nonlinear bound states},
  Comm. Math. Phys. \textbf{100} (1985), no.~2, 173--190. \MR{804458
  (87b:35159)}

\bibitem{Shi14}
Masataka Shibata, \emph{Stable standing waves of nonlinear {S}chr\"odinger
  equations with a general nonlinear term}, Manuscripta Math. \textbf{143}
  (2014), no.~1-2, 221--237. \MR{3147450}

\bibitem{Son10}
Xianfa Song, \emph{Stability and instability of standing waves to a system of
  {S}chr\"odinger equations with combined power-type nonlinearities}, J. Math.
  Anal. Appl. \textbf{366} (2010), no.~1, 345--359. \MR{2593657}

\bibitem{TVZ07}
Terence Tao, Monica Visan, and Xiaoyi Zhang, \emph{The nonlinear
  {S}chr\"odinger equation with combined power-type nonlinearities}, Comm.
  Partial Differential Equations \textbf{32} (2007), no.~7-9, 1281--1343.
  \MR{2354495 (2009f:35324)}

\bibitem{Wei85}
Michael~I. Weinstein, \emph{Modulational stability of ground states of
  nonlinear {S}chr\"odinger equations}, SIAM J. Math. Anal. \textbf{16} (1985),
  no.~3, 472--491. \MR{783974}

\bibitem{Wei86}
\bysame, \emph{Lyapunov stability of ground states of nonlinear dispersive
  evolution equations}, Comm. Pure Appl. Math. \textbf{39} (1986), no.~1,
  51--67. \MR{820338 (87f:35023)}

\end{thebibliography}
\end{document}